\numberwithin{equation}{section}
\newtheorem{theorem}{Theorem}[section]
\newtheorem{lemma}{Lemma}[section]
\newtheorem{proposition}{Proposition}[section]
\newtheorem{definition}{Definition}[section]
\def\qand{{\quad\text{and}\quad}}
\newcommand{\p}{{\mathbb{P}}}
\newcommand{\pn}{{\mathbb{P}^n}}
\newcommand{\op}[1]{{\mathcal O}_{\mathbb{P}^{#1}}}
\newcommand{\tp}[1]{{\rm T}{\mathbb{P}^{#1}}}
\newcommand{\Ext}{\operatorname{Ext}}
\newcommand{\Hom}{\operatorname{Hom}}
\newcommand{\sing}{\operatorname{Sing}}
\DeclareMathOperator{\coker}{coker}
\def\sD{{\mathscr{D}}}
\def\thanksAuthorx{Universidad del Sin\'u-El\'ias Bechara Zain\'um, Facultad de Ciencias e Ingenier\'ias, Departamento de Ciencias B\'asicas, Carrera 1w No. 38-153, Barrio Juan XXIII, Monter\'ia-C\'ordoba, Colombia. email: {\tt hugogaleano@unisinu.edu.co} Orcid: https://orcid.org/0000-0002-6588-1406}
\def\thanksAuthory{Universidad de C\'ordoba, Facultad de Ciencias B\'asicas, Departamento de Matemáticas y Estadística, Carrera 6 No. 77-305, Monter\'ia-C\'ordoba, Colombia. email: {\tt ochaljubzapa@correo.unicordoba.edu.co} Orcid: https://orcid.org/0009-0005-5573-5260}
\begin{document}
	\title{Codimension one distributions of degree $3$ on the three-dimensional projective space}
	
	\author{ {\sc Hugo Galeano }\thanks{\thanksAuthorx}
 \qand {\sc Orlando Chaljub}\thanks{\thanksAuthory}
           }

	\date{}
	
	\maketitle

\begin{abstract}\noindent 
We make a classification of codimension one degree 3 distributions on the projective three space, giving possible Chern classes of the tangent sheaf and describing de zero and one dimensional components of the singular scheme of the distribution. Also, we show the existence and describe  some moduli spaces of such distributions, using the concept of stability of the tangent sheaf.
\end{abstract}

\noindent
{\bf Key words:} 
Distributions,
Stability,
Moduli.


\section{Introduction}\label{sec:intro}


Let $X$ be a smooth projective variety over an algebraically closed field $k$. A codimension $r$ distribution $\mathscr{D}$ on  $X$ is given by an exact sequence

\begin{equation}\label{dis}
\mathscr{D}: 0\to T_{\mathscr{D}} \overset{\phi}{\to} TX \overset{\pi}\to N_{\mathscr{D}} \to 0,
\end{equation}

where $T_{\mathscr{D}}$ is a coherent sheaf of rank $s:=\text{dim} (X)-r$, and $N_{\mathscr{D}}$ is a torsion-free sheaf. The sheaves $T_{\mathscr{D}}$ and $N_{\mathscr{D}}$ are called the \emph{tangent} and the \emph{normal}  sheaves of $\mathscr{D}$, respectively. The distribution $\mathscr{D}$ is called a \emph{foliation} if it is integrable in Frobeniu's sense, that is, is invariant under the Lie bracket, $[T_{\mathscr{D}}, T_{\mathscr{D}}]\subset T_{\mathscr{D}}$.\\

The theory of distributions and foliations has its origins in the nineteenth century through works by Grasmann, Jacobi, Clebsh, Cartan, and Frobenius. The qualitative study of polynomial differential equations was initiated in classical works by
Poincaré, Darboux, and Painlevé, see for instance \cite{darboux1878memoire, Painleve, Poincaré1891161}. In modern terminology the classification and description of spaces of codimension one foliation which are given by polynomial differential 1-forms was initiated by Jouanolou in degrees 0 and 1 see \cite{jouanolou2006equations}, and Cervau and Lins Neto in degree 2, see \cite{CLN}.\\

Recently, a systematic study of distributions, not necessarily integrable, was initiated by the authors of \cite{Calvo-Andrade20209011}. They classify codimension one distribution in terms of invariants of the tangent sheaf and the singular scheme of the distribution. The authors describe the moduli space of distributions using Grothendieck's Quot-scheme for the tangent bundle, they showed that there exists a quasi-projective variety, a moduli space, $\mathcal{D}^{P}$ that parametrizes isomorphism classes of distributions on $X$, where $P$ is fixed is the Hilbert Polynomial of the tangent sheaf $ T_{\mathscr{D}}$. They describe some moduli spaces for codimension one distribution on $\mathbb{P}^3$. Also, the author of \cite{Galeano2022} made a complete picture of codimension one degree 2 distributions on $\p^3$, giving all possible Chern classes of the tangent sheaves, and using differents tecniques, they show the existence of such distribution with this invariants, they also describe some moduli spaces for this distributions. Recently, the authors of \cite{correa2022moduli} gave the full picture of  moduli space of codimension one distributions of degree 1 on $\p^3$.\\

In this work, we give a picture of possible Chern classes of the tangent sheaf of codimension one distribution of degree 3 on $\p^3$, we show existence, describe moduli spaces and stability of the tangent sheaf. The next are the main results of the work.

\begin{theorem}
If $T_\sD$ is the tangent sheaf of a codimension one degree 3 distribution then the possible Chern classes of $T_\sD$ are
    \begin{table}[H] \centering
\begin{tabular}{c c c}
\hline
 deg(C) & $c_2(T_\sD)$ &$c_3(T_\sD)$   \\ \hline \hline
 0 & 11 & 51 \\
 1 & 10 &  42 \\
 2 & 9  &  29, 31, 33, 35\\ 
 3 & 8 & 0, 2, 4, 6, 8, 10, 12, 14, 16, 18, 20, 22, 24, 26, 28, 30  \\
 4 & 7 & 1, 3, 5, 7, 9, 11, 13, 15, 17, 19, 21, 23, 25, 27 \\
 5 & 6 & 0, 2, 4, 6, 8, 10, 12, 14, 16, 18, 20, 22, 24, 26 \\
 6 & 5 & 1, 3, 5, 7, 9, 11, 13, 15, 17, 19, 21, 23, 25 \\
 7 & 4 & 0, 2, 4, 6, 8, 10, 12, 14, 16 \\
 8 & 3 & 1, 3, 5, 7, 9, 11, 13, 15 \\
 9 & 2 & 0, 2, 4, 6, 8 \\
 10 & 1 & 1, 3 \\
 11 & 0 & 0 \\
 13 & -2 & 0 \\
 
\end{tabular}
\caption{Possible Chern classes of codimension one distribution of degree 3; the first column describes the degree of a generic point in the irreducible component of the Hilbert scheme that contains $\sing_1(\sD)$. 
} 
\label{deg 3 table}
\end{table}
\end{theorem}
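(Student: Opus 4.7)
The plan is to translate the Chern-class constraints for $T_\sD$ into a combinatorial condition on the singular scheme, and then enumerate the realizable values. From the defining sequence (\ref{dis}) specialized to $X = \mathbb{P}^3$ and degree $3$, one has $c_1(T_\sD) = -1$ and $N_\sD \cong \mathcal{I}_Z(5)$, where $Z = \sing(\sD)$ has codimension at least two. Let $C \subset \mathbb{P}^3$ be the pure one-dimensional part of $Z$, set $\ell := \deg(C)$, and $\chi_Z := \chi(\mathcal{O}_Z)$. Matching $c(T_\sD)\,c(N_\sD) = c(T\mathbb{P}^3)$ with the Chern character of $\mathcal{I}_Z(5)$ computed via Grothendieck--Riemann--Roch yields
\[
c_2(T_\sD) = 11 - \ell, \qquad c_3(T_\sD) = 7\ell - 51 + 2\chi_Z;
\]
in particular the parity $c_3(T_\sD) \equiv \ell + 1 \pmod{2}$ observed in the table follows immediately from the first formula.

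The next step bounds $\ell$. Any line subbundle $\mathcal{O}(a) \hookrightarrow T_\sD$ composes with $T_\sD \hookrightarrow T\mathbb{P}^3$, so $H^0(T\mathbb{P}^3(-a)) \neq 0$; the twisted Euler sequence forces $a \leq 1$. Writing the maximal destabilizing extension $0 \to \mathcal{O}(a) \to T_\sD \to \mathcal{I}_W(-1-a) \to 0$ with $a \in \{-1,0,1\}$ then gives $c_2(T_\sD) = -a(a+1) + \deg(W) \geq -2$, hence $\ell \leq 13$. To exclude $\ell = 12$, note that $c_2(T_\sD) = -1$ forces $a = 1$ and $c_2(\mathcal{I}_W(-2)) = 1$; a zero-dimensional $W$ contributes only to $c_3(\mathcal{I}_W(-2))$, so $W$ must contain a line. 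Then $\mathcal{I}_W(-2)$ has a one-dimensional non-reflexive locus, and since $\Ext^1(\mathcal{O}(-2),\mathcal{O}(1)) = H^1(\mathcal{O}(3)) = 0$ the sequence splits after double dualization, producing a one-dimensional cokernel $T_\sD^{**}/T_\sD$, contradicting the reflexivity of $T_\sD$.

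Once $\ell$ is fixed, determining the possible values of $c_3(T_\sD)$ reduces to determining the possible values of $\chi_Z$. The minimum in each row is pinned down by $c_3(T_\sD) \geq 0$ (a rank-two reflexive sheaf on $\mathbb{P}^3$ always has $c_3 \geq 0$), combined with the parity constraint. For the maximum, I would run through the irreducible components of the Hilbert scheme of curves of degree $\ell$ in $\mathbb{P}^3$; for the generic member of each component the invariant $\chi(\mathcal{O}_C)$ is known, and the admissible length of the residual zero-dimensional part of $Z$ is controlled by the Hartshorne--Serre type structure of $T_\sD$ induced by its maximal destabilizer. The main obstacle will be proving sharpness of the listed upper bounds on $c_3(T_\sD)$ and ruling out any gaps beyond the parity one: for each candidate pair $(\ell, \chi_Z)$ one needs either an explicit construction of a distribution realizing those invariants (along the lines developed in \cite{Galeano2022} and \cite{correa2022moduli} for degrees $2$ and $1$) or a cohomological obstruction forbidding the existence of a reflexive $T_\sD$ fitting into $0 \to T_\sD \to T\mathbb{P}^3 \to \mathcal{I}_Z(5) \to 0$.
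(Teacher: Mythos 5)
Your overall strategy---write $c_2$ and $c_3$ in terms of $\deg(C)$ and a genus/Euler-characteristic invariant of the singular scheme, then cut the range down by parity, by $c_3\geq 0$, and by bounding $\deg(C)$---is the same as the paper's, but three steps do not go through as written. (i) Your lower bound $c_2(T_\sD)\geq -2$ needs $a\geq -1$ for the maximal twist $a$ with $H^0(T_\sD(-a))\neq 0$, and you only justify $a\leq 1$; for $a\leq -2$ one has $-a(a+1)\leq -6$ and your inequality fails. It is repairable ($a\leq -1$ means $H^0(T_\sD)=0$, so $T_\sD$ is stable and Bogomolov gives $c_2\geq 1$), but as stated the restriction $a\in\{-1,0,1\}$ is unjustified; the paper simply quotes $c_2\geq 1-d=-2$ from Calvo-Andrade et al. (ii) Your exclusion of $\deg(C)=12$ is incorrect: the vanishing of $\Ext^1(\mathcal{O}(-2),\mathcal{O}(1))$ says nothing about $\Ext^1(\mathcal{I}_W(-2),\mathcal{O}(1))$, which receives contributions from local Ext sheaves supported on the curve $W$; a section of a rank $2$ reflexive sheaf vanishing along a curve is precisely the Serre construction and poses no obstruction to reflexivity, so no one-dimensional cokernel of $T_\sD\to T_\sD^{\vee\vee}$ is produced. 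The case is actually eliminated numerically: $c_2=-1$ forces $H^0(T_\sD(-1))=0$ (otherwise $T_\sD$ splits, with $c_2\in\{0,-2\}$), and then Hartshorne's bound $c_3\leq c_2^2+2c_2=-1$ contradicts $c_3\geq 0$. You should also double-check the sign of the $7\ell$ term in your $c_3$ formula against the paper's $c_3=49-7\deg(C)+2P_a(C)$, since your parity claim rests on it.

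(iii) Most importantly, the substance of the table is the list of admissible $c_3$ for each $\deg(C)$, and your proposal stops at ``I would run through the irreducible components of the Hilbert scheme.'' That is exactly the step that has to be carried out. The paper obtains the maximal $c_3$ in each row from Castelnuovo-type bounds on the arithmetic genus of a degree-$\ell$ curve in $\p^3$ (for instance $P_a(C)\leq\tfrac12(\ell-1)(\ell-2)$ for $\ell=3$, and the classification of degree-$2$ curves of low genus for $\ell=2$), supplemented for the small-$c_2$ rows by the reflexive-sheaf inequalities $c_3\leq c_2^2+2c_2$ (and $c_3\leq c_2^2$ in the stable case). Note also that the theorem only asserts these are the \emph{possible} Chern classes, i.e.\ necessary conditions; realizability is treated separately in the paper and is not required here, so your closing demand for explicit constructions in every case overshoots the statement while the bounds themselves remain unproved.
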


\begin{theorem}
    Let $T_\sD$ be the tangent sheaf of a codimension one degree 3 distribution. If  $3\leq c_2(T_\sD)\leq 11$,  then $T_\sD$ is stable.
\end{theorem}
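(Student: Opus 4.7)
The plan is to argue by contradiction, via Hoppe's criterion for stability. Since $T_\sD$ is a rank $2$ reflexive sheaf on $\p^3$ with $c_1(T_\sD)=-1$ (which follows from $c_1(T\p^3)=4$ and $c_1(N_\sD)=d+2=5$ for a distribution of degree $d=3$), $T_\sD$ fails to be stable if and only if there exists a saturated rank one subsheaf $\mathcal{O}(a)\hookrightarrow T_\sD$ with $a\geq 0$, giving an exact sequence
\[
0 \to \mathcal{O}(a) \to T_\sD \to \mathcal{I}_W(-1-a) \to 0
\]
for some closed subscheme $W\subset\p^3$ of codimension at least $2$. First I would bound $a$: composing with $T_\sD\hookrightarrow T\p^3$ gives a nonzero global section of $T\p^3(-a)$, and the twisted Euler sequence forces $H^0(T\p^3(-a))=0$ for $a\geq 2$, so $a\in\{0,1\}$.

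Comparing total Chern classes in the displayed sequence produces
\[
c_2(T_\sD) = \deg(W_1) - a(a+1),
\]
where $W_1$ denotes the pure one-dimensional part of $W$. The key geometric step is to bound $\deg(W_1)$ in terms of the singular scheme of $\sD$. Indeed, the section $s$ of $T_\sD(-a)$ induces, via $T_\sD\hookrightarrow T\p^3$, a section $\tilde{s}$ of $T\p^3(-a)$; at any $p\notin \sing(\sD)$ the sheaf $T_\sD$ is locally free of rank $2$, so the zero schemes of $s$ and $\tilde{s}$ coincide at $p$. Since $\tilde{s}$ is a section of a rank $3$ bundle on a $3$-fold, its vanishing locus is $0$-dimensional, and hence $W_1$ is contained, at least set-theoretically, in $C=\sing_1(\sD)$. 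The preceding theorem yields the identity $\deg(C)=11-c_2(T_\sD)$, and combining these gives
\[
c_2(T_\sD)+a(a+1)=\deg(W_1)\leq\deg(C)=11-c_2(T_\sD).
\]
Thus $c_2(T_\sD)\leq 5$ when $a=0$ and $c_2(T_\sD)\leq 4$ when $a=1$; in particular, no destabilizing subsheaf can exist when $c_2(T_\sD)\geq 6$, so stability is immediate in that range.

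The main obstacle is closing the remaining cases $c_2(T_\sD)\in\{3,4,5\}$, where the simple degree inequality above is not sharp enough. For these values I would upgrade the set-theoretic inclusion $W_1\subset C$ to a scheme-theoretic one and keep track of the $0$-dimensional embedded contributions of $W$, then compare the value of $c_3(T_\sD)$ computed from the destabilizing sequence with the short list of admissible $c_3$ in the preceding theorem. In parallel, one exploits an explicit description of the curves $C$ of degree $6$, $7$, and $8$ that can actually occur as $\sing_1(\sD)$ of a degree $3$ distribution, together with cohomological vanishings for $\mathcal{I}_C(j)$ in small $j$, to exclude both the global-section case $a=0$ and the linear-vector-field case $a=1$. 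The most delicate point is expected to be the boundary case $a=0$ with $c_2(T_\sD)=5$, where the numerical inequality is saturated and a finer scheme-theoretic comparison becomes unavoidable.
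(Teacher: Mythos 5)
Your overall strategy (Hoppe's criterion, bound the twist $a$ of a saturated destabilizing line subbundle, then compare degrees of singular curves) is in the same spirit as the paper, but the execution has a genuine gap at its central step. You claim that the composite section $\tilde{s}\in H^0(T\mathbb{P}^3(-a))$ has $0$-dimensional vanishing locus "since it is a section of a rank $3$ bundle on a $3$-fold," and deduce $W_1\subset C$. This is false for $a=0$: a nonzero global vector field on $\mathbb{P}^3$ corresponds to a $4\times 4$ matrix and vanishes along the union of its projectivized eigenspaces, which can contain lines (and even planes, though those are excluded by saturation). So $W_1$ may well have components outside $\sing(\sD)$, and the inequality $\deg(W_1)\leq\deg(C)$ does not follow. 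Even where components of $W_1$ do lie in $C$, a set-theoretic inclusion does not bound $\deg(W_1)$ by $\deg(C)$, since $W_1$ can be non-reduced along a reduced component of $C$. The correct comparison — the one the paper makes for $c_2=3$ — is in the other direction: the zero scheme of the section is contained \emph{scheme-theoretically} in the singular scheme of the induced foliation by curves $\mathcal{O}(a)\to T\mathbb{P}^3$ (not of the distribution $\sD$), and for $a=0$ that foliation has degree $1$, so its one-dimensional singular locus has degree at most $2$ by a cited result; since the zero curve of a section of $T_\sD$ has degree $c_2(T_\sD)\geq 3$, this is already a contradiction for \emph{all} $c_2\geq 3$ in the case $a=0$. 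Your route, as written, establishes nothing rigorously even for $c_2\geq 6$.

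Second, the cases $c_2\in\{3,4,5\}$ are only a plan ("I would upgrade\ldots one exploits\ldots"), not a proof — and $c_2=3$ is precisely the case the paper must treat by a separate argument, the one sketched above. For $4\leq c_2\leq 11$ the paper simply invokes \cite[Proposition 6.3]{Calvo-Andrade20209011} with $d=3$ and rules out the split possibilities $\mathcal{O}_{\mathbb{P}^3}(-1)\oplus\mathcal{O}_{\mathbb{P}^3}$ and $\mathcal{O}_{\mathbb{P}^3}(-2)\oplus\mathcal{O}_{\mathbb{P}^3}(1)$, whose Chern classes $(-1,0,0)$ and $(-1,-2,0)$ fall outside the range. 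To repair your argument you would need (i) to replace the false $0$-dimensionality claim by the correct containment into the singular scheme of the induced foliation by curves together with the degree bounds for foliations by curves of degrees $1$ and $0$ (the cases $a=0$ and $a=1$), and (ii) to make those containments scheme-theoretic so that degrees actually compare.
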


\begin{theorem}
    There exist codimension one distributions with tangent sheaf having Chern classes $(-1, 2, 2)$, $(-1, 3, 5)$, and $(-1, 3, 7)$.
\end{theorem}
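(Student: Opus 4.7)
The plan is to prove existence by exhibiting, in each of the three cases, a concrete codimension one degree $3$ distribution on $\mathbb{P}^3$ whose tangent sheaf realises the prescribed Chern classes. A distribution of this type is determined by a twisted $1$-form $\omega \in H^0(\Omega^1_{\mathbb{P}^3}(5))$, or equivalently, via the Euler sequence, by a $4$-tuple $(A_0, A_1, A_2, A_3)$ of homogeneous polynomials of degree $4$ satisfying $\sum_i x_i A_i = 0$; the singular scheme $Z$ is cut out by the common vanishing of the $A_i$, and the tangent sheaf sits in
\[ 0 \to T_\sD \to T\mathbb{P}^3 \xrightarrow{\omega} \mathcal{I}_Z(5) \to 0. \]
In particular $c_1(T_\sD) = -1$, and the remaining Chern classes are determined by the numerics of $Z$. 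Reading Table~\ref{deg 3 table}, the $1$-dimensional part $C \subset Z$ must have degree $9$ in the case $(-1,2,2)$ and degree $8$ in both cases $(-1,3,5)$ and $(-1,3,7)$.

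The strategy I would follow is to first fix, for each target triple, an appropriate $1$-dimensional subscheme $C \subset \mathbb{P}^3$ of the required degree and arithmetic genus (for instance, a disjoint union of lines, a low-degree complete intersection, or a union of both), and then produce an explicit $1$-form whose coefficients all vanish on $C$. The space of such forms can be identified with $H^0(\mathbb{P}^3, \mathcal{I}_C \otimes \Omega^1_{\mathbb{P}^3}(5))$; when $C$ is sufficiently generic this space is computed from the exact sequence $0 \to \mathcal{I}_C \otimes \Omega^1_{\mathbb{P}^3}(5) \to \Omega^1_{\mathbb{P}^3}(5) \to \Omega^1_{\mathbb{P}^3}(5)|_C \to 0$ and shown to be non-empty; one then writes an explicit element in coordinates. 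An alternative route, especially useful when $c_3$ is small, is Serre's construction: present $T_\sD$ as an extension
\[ 0 \to \mathcal{O}_{\mathbb{P}^3}(a) \to T_\sD \to \mathcal{I}_Y(b) \to 0 \]
with $a + b = -1$ for a suitably chosen subscheme $Y$, and then verify that the resulting rank $2$ reflexive sheaf embeds into $T\mathbb{P}^3$ so as to define a distribution.

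The hard part will be fine-tuning the third Chern class $c_3$, which encodes both the arithmetic genus of $C$ and the length of the $0$-dimensional part of $Z$. For example, in the two cases $(-1,3,5)$ and $(-1,3,7)$, with the same $c_2 = 3$ and hence the same $\deg C = 8$, the two distinct admissible values of $c_3$ must be realised by distributions whose curves $C$ either have different arithmetic genus or come with different isolated-singularity contributions; to pass from $c_3=5$ to $c_3=7$ one typically perturbs a reference $1$-form vanishing on $C$ by a carefully chosen correction that adds precisely one extra pair of isolated zeros without altering the curve. Once such a choice is in hand, verifying that $T_\sD$ has the target Chern classes reduces to Whitney's formula applied to the defining exact sequence, together with a local computation at the isolated singular points of $\omega$.
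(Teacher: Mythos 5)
Your proposal is a construction plan rather than a proof: for none of the three triples do you actually exhibit the curve $C$, the $1$-form $\omega$, or the verification that the resulting singular scheme is what you need. Every step you would have to carry out is deferred — showing $H^0(\mathcal{I}_C\otimes\Omega^1_{\mathbb{P}^3}(5))\neq 0$ for your chosen $C$, showing that a general such form has one-dimensional singular locus \emph{exactly} $C$ (and not something larger), and pinning down the arithmetic genus. Moreover, the mechanism you propose for separating $c_3=5$ from $c_3=7$ is not correct. By the formula recalled in Section 4, $c_3(T_\sD)=49-7\deg(C)+2P_a(C)$, so with $\deg(C)=8$ fixed the third Chern class is determined entirely by $P_a(C)$ ($P_a=6$ gives $c_3=5$, $P_a=7$ gives $c_3=7$); the length of the zero-dimensional part is itself a consequence of these data, not an independent dial. "Adding a pair of isolated zeros without altering the curve" is therefore not an available move, and the two cases must instead be realised by curves of degree $8$ with different arithmetic genera. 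This is precisely the kind of fine control that makes the explicit-form approach hard, and the proposal does not supply it.

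The paper sidesteps all of this. It starts from the \emph{sheaf} rather than the form: by Chang's classification, stable rank $2$ reflexive sheaves $E$ with Chern classes $(-1,2,2)$, $(-1,3,5)$, $(-1,3,7)$ exist, and her cohomology tables give $H^1(E(1))=H^2(E)=H^3(E(-1))=0$, i.e.\ $E$ is $2$-regular. Theorem \ref{teorema regularity} then makes $E(2)$ globally generated, and Lemma \ref{corolario que genera distribuciones} converts any globally generated rank $2$ reflexive sheaf $G$ into a distribution with tangent sheaf $G^{\vee}(1)$; applied to $G=E(2)$ this yields a degree $3$ distribution with tangent sheaf $E$. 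Existence is thus reduced to quotable facts about moduli of reflexive sheaves, with no need to produce a curve or a $1$-form explicitly. If you want to salvage your route, you would need, for each triple, a concrete $C$ of degree $9$ (resp.\ $8$) with the arithmetic genus forced by the formula above, together with an actual computation of the linear system of forms singular along it — none of which is yet in your argument.
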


\begin{theorem}
    The moduli space $\mathcal{D}^{st}(3, 3, 5)$ of codimension one distribution of degree 3 with Chern classes $(-1, 3, 5)$ and stable and general tangent sheaf, is  an irreducible quasi projective variety, of dimension
    \[\dim\mathcal{D}^{st}(3, 3, 5)= 42.\]
\end{theorem}





\section{Codimension One Distributions}\label{sec:contprob}
Let $X$ be a smooth projective variety over an algebraically closed field $k$. A codimension $r$ distribution $\mathscr{D}$ on  $X$ is given by an exact sequence

\begin{equation}\label{dis}
\mathscr{D}: 0\to T_{\mathscr{D}} \overset{\phi}{\to} TX \overset{\pi}\to N_{\mathscr{D}} \to 0,
\end{equation}

where $T_{\mathscr{D}}$ is a coherent sheaf of rank $s:=\text{dim} (X)-r$, and $N_{\mathscr{D}}$ is a torsion-free sheaf. The sheaves $T_{\mathscr{D}}$ and $N_{\mathscr{D}}$ are called the \emph{tangent} and the \emph{normal}  sheaves of $\mathscr{D}$, respectively. Since $TX$ is locally free and $N_{\mathscr{D}}$ is torsion-free then $T_{\mathscr{D}}$ must be reflexive, see \cite[Proposition 1.1]{hartshorne1980stable}.\\

Two distributions $$\mathscr{D}: 0\to T_{\mathscr{D}} \overset{\phi}{\to} TX \overset{\pi}\to N_{\mathscr{D}} \to 0,$$ and $$\mathscr{D}': 0\to T'_{\mathscr{D}} \overset{\phi'}{\to} TX \overset{\pi'}\to N'_{\mathscr{D}} \to 0,$$

are said to be isomorphic if there exists an isomorphism $\beta: T_{\mathscr{D}}\to T_{\mathscr{D} '}$, such that $\phi'\circ \beta=\phi$. According to \cite{Calvo-Andrade20209011} every isomorphism class of codimension $r$ distribution on $X$ induces an element in the projective space
\[\mathbb{P}H^0(\wedge^{n-r}TX\otimes \text{det} (T_{\mathscr{D}})^{\vee})=\mathbb{P}H^0(\Omega^r_{X}\otimes \text{det} (TX)\otimes \text{det}(T_{\mathscr{D}})^{\vee}).\]


The \emph{singular scheme} of $\mathscr{D}$ is defined as follows. Taking the maximal  exterior power of the dual morphism $\phi^{\vee}:\Omega^1_{X}\to T^{\vee}_{\mathscr{D}}$ and a twist, we obtain a morphism \[\Omega^s_{X}\otimes\text{det} (T_{\mathscr{D}})\to \mathcal{O}_X;\] the image of such morphism is the ideal sheaf $\mathcal{I}_{Z/X}$ of a subscheme $Z\subset X$, the singular scheme of $\mathscr{D}$.\\


\section{The Forgetful Morphism}\label{sec:VEM}
The material of this section can be found in \cite{Calvo-Andrade20209011}. We do hear for the sake of completeness.\\
Let $X$ now denote a polarized nonsingular projective variety of dimension $n$. The Grothendieck's Quot-scheme for the tangent bundle $TX$ is defined as follows.\\


Let $\mathfrak{Sch}_{/\mathbb{C}}$ denote the category of schemes of finite type over $\mathbb{C}$, and  $\mathfrak{Sets}$ be the category of sets. Fix a polynomial $P\in \mathbb{Q}[t],$ and consider the functor 
\[\mathcal{Q}uot^P\colon\mathfrak{Sch}^{\text{op}}_{/\mathbb{C}} \to \mathfrak{Sets}, \hspace{0,5cm} \mathcal{Q}uot^P(S):=\{(N,\eta)\}/\sim,\]
where
\begin{enumerate}
	\item[(i)] $N$ is  a coherent sheaf of $\mathcal{O}_{X\times S}$-modules, flat over $S$, such that the Hilbert polynomial of $N_s:=N|_{X\times\{s\}}$ is equal to $P$ for every $s\in S$;
	\item[(ii)] $\eta\colon\pi^*_XTX \to N$ is an epimorphism, where $\pi_X\colon X\times S\to X$ is the standard projection onto the first factor.
\end{enumerate}
In addition, we say that $(N,\eta)\sim(N',\eta')$ if there exists an isomorphism $\gamma\colon N\to N'$, such that $\gamma\circ\eta=\eta'$.\\

Finally, if $f\colon R\to S$ is a morphism in $\mathfrak{Sch}_{/\mathbb{C}}$, we define $\mathcal{Q}uot^P(f)\colon\mathcal{Q}uot^P(S)\to \mathcal{Q}uot^P(R)$ by $(N,\eta)\to (f^*N,f^*\eta)$. Elements of the set $\mathcal{Q}uot^P(S)$ will be denoted by $[N,\eta]$.

\begin{theorem}
	The functor $\mathcal{Q}uot^P$ is represented by a projective scheme $\mathcal{Q}^P$ of finite type over $\mathbb{C}$, that is, there exists an isomorphism of functors $\mathcal{Q}uot^P\overset{\sim}\longrightarrow \text{Hom}(\cdot,\mathcal{Q}^P)$.
\end{theorem}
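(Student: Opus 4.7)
The plan is to adapt the classical construction of Grothendieck's Quot scheme to this specific setting, where the quotiented sheaf is the fixed tangent bundle $TX$. The overall strategy is to realize $\mathcal{Q}^P$ as a closed subscheme of a suitable Grassmannian via a large twist, so that projectivity is automatic from the ambient projective embedding.

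First I would establish the required boundedness. Fix an ample line bundle $\mathcal{O}_X(1)$ giving the polarization. The set of coherent quotients $\eta \colon TX \twoheadrightarrow N$ with Hilbert polynomial $P$ is bounded, e.g.\ by Kleiman's boundedness theorem applied to the subsheaves $\ker(\eta)\subset TX$ with fixed Hilbert polynomial $\chi(TX)-P$. Hence there exists $m_0$ such that for every $m\ge m_0$ and every such $N$, the sheaves $N(m)$ and $\ker(\eta)(m)$ are $0$-regular in the sense of Castelnuovo--Mumford. Consequently $H^i(N(m))=0=H^i(\ker(\eta)(m))$ for $i\ge 1$, the dimension of $H^0(N(m))$ equals $P(m)$, and the induced map $H^0(TX(m))\to H^0(N(m))$ is surjective.

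Next I would construct a classifying map to a Grassmannian. Fix $m\ge m_0$ and put $V:=H^0(X,TX(m))$, a finite-dimensional vector space. For any test scheme $S$ and any family $(N,\eta)\in\mathcal{Q}uot^P(S)$, the twist $N(m):=N\otimes \pi_X^*\mathcal{O}_X(m)$ is $S$-flat, and cohomology and base change (using the vanishing above) yields that $\pi_{S*}N(m)$ is a locally free sheaf of rank $P(m)$ on $S$, and that $V\otimes \mathcal{O}_S\twoheadrightarrow \pi_{S*}N(m)$ is a surjection of locally free sheaves. This defines a natural transformation $\mathcal{Q}uot^P\to \underline{\mathrm{Gr}}(P(m),V)$ to the Grassmannian functor, which is already known to be represented by a projective scheme $G:=\mathrm{Gr}(P(m),V)$.

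Then I would identify the image as a closed subfunctor. On $G$ one has a tautological quotient $V\otimes \mathcal{O}_G\twoheadrightarrow \mathcal{U}$ of rank $P(m)$. Pulling back to $X\times G$ and comparing with $\pi_X^*TX(m)$, one gets a universal morphism whose cokernel, after untwisting, is a candidate for the universal quotient. The conditions that (a) this cokernel be flat over the base with the correct Hilbert polynomial for all twists and (b) that the natural map be the one predicted by $0$-regularity, cut out a locally closed subscheme of $G$; flattening stratifications (Mumford) together with the semicontinuity of cohomology force this locus to be closed in $G$. Call this closed subscheme $\mathcal{Q}^P$. The universal family on $\mathcal{Q}^P$ is obtained by restricting the universal cokernel on $X\times G$, and verifying the universal property is a diagram chase using the functoriality of pushforward and of $0$-regularity.

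The main obstacle, and the only truly technical point, is showing that the locus in $G$ parametrizing morphisms $\pi_X^*TX \to N$ with $N$ flat of Hilbert polynomial $P$ (as opposed to merely having $P(m)$-dimensional global sections of the twist) is closed, not just locally closed. This is where one invokes Mumford's flattening stratification for the family $\pi_{X*}(N(k))$ as $k$ varies, together with the fact that the Hilbert polynomial is locally constant on a flat family; after choosing $m$ large enough to dominate finitely many thresholds, only the stratum with the prescribed Hilbert polynomial survives inside the closure, giving closedness and hence projectivity of $\mathcal{Q}^P$. The independence of the construction from the choice of $m\ge m_0$ is then a routine comparison via Plücker-type maps.
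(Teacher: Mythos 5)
The paper itself offers no proof of this statement: it is Grothendieck's classical representability theorem for the Quot functor, and the section containing it is explicitly imported from the literature for completeness. Your proposal is therefore measured against the standard construction rather than against an argument in the paper. Up to a point your outline reproduces that construction faithfully: boundedness of the family of quotients with Hilbert polynomial $P$, a uniform regularity bound $m_0$, the classifying map to the Grassmannian $\mathrm{Gr}\bigl(P(m),H^0(TX(m))\bigr)$, and the use of Mumford's flattening stratification to cut out the locus over which the universal cokernel is flat with Hilbert polynomial $P$. All of this is correct and yields $\mathcal{Q}^P$ as a \emph{locally closed} subscheme of the Grassmannian, hence a quasi-projective scheme representing the functor.

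The gap is in the last step, where you assert that the flattening stratification together with semicontinuity of cohomology "force this locus to be closed," with closedness following after choosing $m$ large enough. That is not an argument: the strata of a flattening stratification are only locally closed, and the stratum attached to a fixed Hilbert polynomial is in general not closed in the base, no matter how large $m$ is taken (a quotient with polynomial $P$ can degenerate to one with a different polynomial, and the twist by $m$ does not prevent this). The standard, and essentially unavoidable, route to projectivity is to verify the valuative criterion of properness for $\mathcal{Q}^P$: given a discrete valuation ring $R$ with fraction field $K$ and a quotient $TX_K\twoheadrightarrow N_K$ with kernel $F_K$, one extends it by taking the saturation $F_R:=F_K\cap TX_R$ inside $TX_R$; the quotient $TX_R/F_R$ has no $R$-torsion, hence is flat over $R$, and by flatness its closed fibre again has Hilbert polynomial $P$. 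Properness of a locally closed subscheme of a projective scheme forces it to be closed, and projectivity follows. Without this step your construction proves only that $\mathcal{Q}^P$ is quasi-projective, which is strictly weaker than the statement.
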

The next subset of the   Quot-scheme will be important to describe the moduli space of codimension one distribution:
\begin{equation}\label{subsetquot}
\mathcal{D}^P:=\{[N,\eta]\in \mathcal{Q}^{P_{TX}-P};\ N \ \text{is torsion free}\},
\end{equation}
where $P_{TX}$ is the Hilbert polynomial of the tangent bundle. Note that, by \cite[Proposition 2.3.1]{huybrechts2010geometry}, $\mathcal{D}^P$ is an open subset of $\mathcal{Q}^{P_{TX}-P}$.\\

Again, fix a polynomial $P\in \mathbb{Q}[t]$, and consider the functor
\[\mathcal{D}ist^P\colon\mathfrak{Sch}^{\text{op}}_{/\mathbb{C}} \to \mathfrak{Sets}, \hspace{0,5cm} \mathcal{D}ist^P(S):=\{(F,\phi)\}/\sim,\]
where
\begin{enumerate}
	\item[(i)] $F$ is  a coherent sheaf of $\mathcal{O}_{X\times S}$-modules, flat over $S$, such that the Hilbert polynomial of $F_s:=F|_{X\times\{s\}}$ is equal to $P$ for every $s\in S$;
	\item[(ii)] $\phi\colon F \to \pi^*_XTX $ is a morphism of sheaves, such that $\phi_s\colon F_s\to TX$ is injective, and $\text{coker}\phi_s$ is torsion-free for every $s\in S$.
\end{enumerate}
In addition, we say that $(F,\phi)\sim(F',\phi')$ if there exists an isomorphism $\beta\colon F\to F'$, such that $\phi'\circ\beta=\phi$.

Finally, if $f\colon R\to S$ is a morphism in $\mathfrak{Sch}_{/\mathbb{C}}$, we define $\mathcal{D}ist^P(f)\colon \mathcal{D}ist^P(S)\to \mathcal{D}ist^P(R)$ by $(F,\phi)\to (f^*F,f^*\phi)$. Equivalence classes in the set $\mathcal{D}ist^P(S)$ will be denoted by $[F,\phi]$.

Each pair $(F,\phi)$ should be regarded as a \emph{family of distributions on X parametrized by the scheme} $S$ (or an $S$-family for short): for each $s\in S$, we have the distribution given by the short exact sequence
\begin{equation*}
\mathcal{D}_s: 0\to F_s \overset{\phi_s}\longrightarrow TX \overset{\eta_{\phi_s}}\longrightarrow \text{coker}\phi_s \to 0,
\end{equation*}
where $\eta_{\phi}$ denotes the canonical projection $\pi^*_XTX\to \coker\phi$. Note that if $(F,\phi)$ and $(F',\phi')$ are equivalent $S$-families of distributions on $X$, then for each $s$, the distributions $\mathcal{D}_s$ and $\mathcal{D}'_s$ are isomorphic.

\begin{proposition} 
	The functor $\mathcal{D}ist^P$ is represented by the quasi-projective scheme $\mathcal{D}^P$, described in display \eqref{subsetquot}, that is, there exists an isomorphism of functors $\mathcal{D}ist^P\overset{\sim}\longrightarrow \text{Hom}(\cdot,\mathcal{D}^P)$.
\end{proposition}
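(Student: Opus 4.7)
The plan is to produce a natural isomorphism between $\mathcal{D}ist^P$ and the subfunctor of $\mathcal{Q}uot^{P_{TX}-P}$ corresponding to the open subscheme $\mathcal{D}^P\subset \mathcal{Q}^{P_{TX}-P}$, by applying the kernel--cokernel correspondence to the tautological sequence of a distribution. Concretely, I would define two natural transformations: the \emph{cokernel map} sending $(F,\phi)\in\mathcal{D}ist^P(S)$ to $(\coker(\phi), \eta_\phi)$, where $\eta_\phi\colon\pi_X^*TX\twoheadrightarrow\coker(\phi)$ is the canonical projection; and the \emph{kernel map} sending $(N,\eta)\in\mathcal{D}^P(S)$ to $(\ker(\eta),\phi)$, with $\phi$ the canonical inclusion. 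The whole proof then reduces to checking that these land in the correct target, are mutually inverse at the level of equivalence classes, and are compatible with pullbacks.

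For the cokernel direction, I would first verify that $N:=\coker(\phi)$ is $S$-flat. From the short exact sequence $0\to F\to \pi_X^*TX\to N\to 0$, together with the $S$-flatness of $F$ (hypothesis) and of $\pi_X^*TX$ (locally free), the long exact sequence of $\mathcal{T}\!or^{\mathcal{O}_S}_\bullet(-,k(s))$ forces $\mathcal{T}\!or^{\mathcal{O}_S}_1(N,k(s))=0$ for every closed point $s\in S$, and the local criterion of flatness yields $S$-flatness of $N$. The fiberwise Hilbert polynomial is $P_{TX}-P$ by additivity, and fiberwise torsion-freeness of $N_s=\coker(\phi_s)$ is precisely condition (ii) in the definition of $\mathcal{D}ist^P$, so $[N,\eta_\phi]\in\mathcal{D}^P(S)$.

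For the kernel direction, given $[N,\eta]\in\mathcal{D}^P(S)$, the sheaf $F:=\ker(\eta)$ is $S$-flat as the kernel of a surjection between $S$-flat sheaves. Because $N$ is $S$-flat, tensoring $0\to F\to\pi_X^*TX\to N\to 0$ with $k(s)$ preserves exactness, so $\phi_s\colon F_s\hookrightarrow TX$ is injective with cokernel $N_s$, which is torsion-free by hypothesis; the fiberwise Hilbert polynomial of $F$ is $P_{TX}-(P_{TX}-P)=P$. Hence $[F,\phi]\in\mathcal{D}ist^P(S)$. That the two constructions are mutual inverses on equivalence classes follows from the universal properties of kernel and cokernel: any $\beta\colon F\to F'$ with $\phi'\circ\beta=\phi$ descends uniquely to an isomorphism $\gamma\colon N\to N'$ satisfying $\gamma\circ\eta_\phi=\eta_{\phi'}$, and conversely any $\gamma$ intertwining the surjections lifts uniquely to a $\beta$ intertwining the inclusions.

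The main technical subtlety, and the step I expect to require the most care, is naturality with respect to a morphism $f\colon R\to S$: the functors $\mathcal{D}ist^P$ and $\mathcal{Q}uot^{P_{TX}-P}$ pull families back via $f^*$, which is only right-exact in general. The point is that $S$-flatness of $N$ (respectively of $F$) makes $f^*$ exact on the tautological sequence, so that $f^*\coker(\phi)=\coker(f^*\phi)$ and $f^*\ker(\eta)=\ker(f^*\eta)$; this is exactly what is needed to commute the two natural transformations with pullback and conclude that $\mathcal{D}ist^P\xrightarrow{\sim}\Hom(-,\mathcal{D}^P)$. The quasi-projectivity of $\mathcal{D}^P$ then comes for free from its description as an open subscheme of a projective Quot-scheme.
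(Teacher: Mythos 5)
The paper does not actually prove this proposition: it is recalled verbatim from \cite{Calvo-Andrade20209011} (the section opens by saying the material is included only for completeness), so there is no in-paper argument to compare against. Your proof is the standard one — the kernel--cokernel correspondence between $S$-families of distributions and $S$-flat torsion-free quotients of $\pi_X^*TX$, plus naturality under pullback — and it is essentially the argument given in the cited reference, so the overall strategy is sound and complete in outline.

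One justification is misstated and, as written, would fail. You claim that $S$-flatness of $F$ together with local freeness of $\pi_X^*TX$ ``forces'' $\operatorname{Tor}^{\mathcal{O}_S}_1(N,k(s))=0$. That is false: for $S=\mathbb{A}^1$ and $\phi=t\cdot\colon\mathcal{O}\to\mathcal{O}$, both source and target are $S$-flat but $\coker(\phi)$ is not. The long exact sequence only gives $\operatorname{Tor}^{\mathcal{O}_S}_1(N,k(s))\cong\ker(\phi\otimes k(s))=\ker(\phi_s)$, so the essential input is the \emph{fiberwise injectivity} of $\phi$, i.e.\ condition (ii) in the definition of $\mathcal{D}ist^P$ — not the flatness of the two outer terms. (A related point you gloss over: condition (ii) only asserts injectivity of $\phi_s$, so the injectivity of $\phi$ itself, needed to even write the short exact sequence $0\to F\to\pi_X^*TX\to N\to 0$, requires a short Nakayama-type argument; one first shows $N$ is flat via the fiberwise injectivity applied to $\operatorname{im}(\phi)$, then deduces $\ker(\phi)=0$ from the vanishing of its fibers.) Both issues are repaired by hypotheses you already have on hand, so this is a fixable slip rather than a structural flaw, but the Tor vanishing is the crux of the flatness step and should be attributed to the right hypothesis.
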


For the construction of the forgetful morphism, we need to recall the construction of the Gieseker-Maruyama moduli space of semistable sheaves on $X$; see \cite[Section 4]{huybrechts2010geometry}. Recall that a torsion-free sheaf $\mathscr{E}$ on $X$ is said to be (semi)stable if every nontrivial subsheaf $\mathscr{F}\subset \mathscr{E}$ satisfies
\[\frac{P_{\mathscr{F}}(t)}{\text{rank}\, (\mathscr{F})}(\leq)<\frac{P_{\mathscr{E}}(t)}{\text{rank}\, (\mathscr{E})},\]
for $t$ sufficiently large. 

Consider the functor
\[\mathcal{M}^P:\mathfrak{Sch}^{\text{op}}_{/\mathbb{C}} \to \mathfrak{Sets}, \hspace{0,5cm} \mathcal{M}^P(S):=\{F\}/\sim,\]
where $F$ is  a sheaf on $X\times S$, flat over $S$, such that for each $s\in S$ the sheaf $F_s:=F|_{X\times\{s\}}$ is semistable and has Hilbert polynomial equal to $P$; the equivalence relation $\sim$ is just an isomorphism of sheaves on $X\times S$.\\

The functor $\mathcal{M}^P$ admits a \emph{coarse moduli space}, here denoted by $M^P$, which is a projective scheme see \cite[Section 4]{huybrechts2010geometry}; this means that the closed points of $M^P$ are in bijection with the $S$-equivalence classes of semistable sheaves on $X$ with fixed Hilbert polynomial $P$ and that there exists a natural transformation $\mathcal{M}^P\longrightarrow\text{Hom}(\cdot, M^P)$. Below, $M^{P, st}$ will denote the open subset of $M^P$ consisting of stable sheaves.\\

Let $\mathcal{D}ist^{P,ss}$ be the subfunctor of $\mathcal{D}ist^P$ given by
\[\mathcal{D}ist^{P,ss}:\mathfrak{Sch}^{\text{op}}_{/\mathbb{C}} \to \mathfrak{Sets}, \hspace{0,5cm} \mathcal{D}ist^{P,ss}(S):=\{(F,\phi)\}/\sim,\]
where $F_s$ is now assumed to be semistable for each $s\in S$. Clearly, $\mathcal{D}ist^{P,ss}\simeq\text{Hom}(\cdot,\mathcal{D}^{P,ss})$, where
\begin{equation*}
\mathcal{D}^{P,ss}:=\{[F,\phi]\in \mathcal{D}^{P}| F \ \text{is semistable}\};
\end{equation*}
note that $\mathcal{D}^{P,ss}$ is an open subset of $\mathcal{D}^{P}$. Similarly, we will also consider the following open subset of $\mathcal{D}^{P}$:
\begin{equation*}
\mathcal{D}^{P,st}:=\{[F,\phi]\in \mathcal{D}^{P}| F \ \text{is stable}\}.
\end{equation*}

\begin{lemma}
	There exists a forgetful morphism
	\[\varpi: \mathcal{D}^{P,ss}\to M^P, \hspace{0,5cm} \varpi([\mathscr{D}]):=[T_{\mathscr{D}}].\]
	In addition, if $T_{\mathscr{D}}$ is stable and satisfies $\Ext^1(T_{\mathscr{D}}, TX)=\Ext^2(T_{\mathscr{D}}, T_{\mathscr{D}})=0$, then $[\mathscr{D}]$ is a  nonsingular point of $\mathcal{D}^{P,st}$, $\varpi$ is a submersion at $[T_{\mathscr{D}}]\in M^{P,st}$, and
	\[\dim_{[\mathscr{D}]}\mathcal{D}^{P,st}=\dim\Ext^1(T_{\mathscr{D}},T_{\mathscr{D}})+\dim\Hom(T_{\mathscr{D}},TX)-1.\]
\end{lemma}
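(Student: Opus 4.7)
The plan is to handle the three claims in sequence: first construct $\varpi$, and then use a single long exact sequence to obtain smoothness at $[\sD]$, the dimension formula, and the surjectivity of the differential.

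To define $\varpi$ I would use the functorial descriptions already in place. Given an $S$-family $(F,\phi) \in \mathcal{D}ist^{P,ss}(S)$, the sheaf $F$ alone is $S$-flat with semistable fibers of Hilbert polynomial $P$, hence an element of $\mathcal{M}^P(S)$; the assignment $(F,\phi) \mapsto F$ is natural in $S$ and descends to equivalence classes, since any isomorphism $\beta\colon F \to F'$ intertwining $\phi$ and $\phi'$ is in particular an isomorphism of sheaves. This furnishes a natural transformation $\mathcal{D}ist^{P,ss} \to \mathcal{M}^P$. Composing with the canonical transformation $\mathcal{M}^P \to \Hom(-,M^P)$ coming from the coarse moduli property of $M^P$, and invoking that $\mathcal{D}^{P,ss}$ represents $\mathcal{D}ist^{P,ss}$, Yoneda delivers $\varpi$, whose effect on closed points is the stated one.

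For the remaining statements, applying $\Hom(T_\sD,-)$ to the distribution sequence yields the long exact sequence
\[\cdots \to \Hom(T_\sD, T_\sD) \to \Hom(T_\sD, TX) \to \Hom(T_\sD, N_\sD) \overset{\delta}{\to} \Ext^1(T_\sD, T_\sD) \to \Ext^1(T_\sD, TX) \to \Ext^1(T_\sD, N_\sD) \to \Ext^2(T_\sD, T_\sD).\]
Standard deformation theory of the Quot-scheme identifies the tangent space to $\mathcal{D}^{P,st}$ at $[\sD]$ with $\Hom(T_\sD, N_\sD)$ and the obstructions with $\Ext^1(T_\sD, N_\sD)$. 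Under the hypotheses $\Ext^1(T_\sD, TX) = \Ext^2(T_\sD, T_\sD) = 0$, the sequence forces $\Ext^1(T_\sD, N_\sD) = 0$, and hence $[\sD]$ is a smooth point of $\mathcal{D}^{P,st}$. Stability of $T_\sD$ then gives $\Hom(T_\sD, T_\sD) \simeq \mathbb{C}$ and collapses the sequence to the four-term exact sequence
\[0 \to \mathbb{C} \to \Hom(T_\sD, TX) \to \Hom(T_\sD, N_\sD) \overset{\delta}{\to} \Ext^1(T_\sD, T_\sD) \to 0,\]
from which the alternating sum of dimensions yields the claimed formula. The differential $d\varpi_{[\sD]}$ sends an infinitesimal deformation of the inclusion $T_\sD \hookrightarrow TX$ to the induced abstract deformation of $T_\sD$, which is precisely the connecting map $\delta$; its surjectivity in the displayed sequence shows that $\varpi$ is a submersion at $[\sD]$.

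The delicate step is the identification of $d\varpi_{[\sD]}$ with the connecting homomorphism $\delta$. This requires unwinding the tangent-space isomorphisms for the Quot-scheme and for the moduli of sheaves, and checking that under these identifications the forgetful natural transformation constructed in the first step induces, on first-order deformations, exactly the boundary map coming from the short exact sequence $0 \to T_\sD \to TX \to N_\sD \to 0$. Everything else is formal manipulation of the long exact sequence.
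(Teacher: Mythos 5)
Your argument is correct and is essentially the standard one: the paper itself offers no proof, deferring entirely to \cite[Lemma 2.5]{Calvo-Andrade20209011}, and your construction of $\varpi$ via the natural transformation $\mathcal{D}ist^{P,ss}\to\mathcal{M}^P$ followed by the coarse-moduli property, together with the long exact sequence obtained by applying $\Hom(T_{\mathscr{D}},-)$ to the distribution sequence, is exactly how that reference proceeds. The one step you flag but do not carry out --- identifying $d\varpi_{[\mathscr{D}]}$ with the connecting homomorphism $\delta\colon\Hom(T_{\mathscr{D}},N_{\mathscr{D}})\to\Ext^1(T_{\mathscr{D}},T_{\mathscr{D}})$ under the standard tangent-space descriptions of the Quot-scheme and of the moduli of sheaves --- is indeed the only nontrivial content beyond formal manipulation, and it is a known compatibility (see, e.g., the deformation theory of quotients in Huybrechts--Lehn), so your proof is complete modulo citing or reproducing that identification.
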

\begin{proof}
	See \cite[Lemma 2.5]{Calvo-Andrade20209011}.
\end{proof}
 Since the tangent sheaf of a distribution is always reflexive, the image of $\varpi$ is contained in the open subset of $M^p$ consisting of reflexive sheaves. In the next lemma let $M^{P,r, st}$ denote the open subset of $M^p$ consisting of stable reflexive sheaves.
\begin{lemma} \label{el lema}
	Assume that the forgetful morphism $\varpi: \mathcal{D}^{P,st}\to M^{P,r,st}$ is surjective and  that $M^{P,r,st}$ is irreducible. If $\dim\Hom(F,TX)$ is constant for all $[F]\in M^{P,r,st}$, then $\mathcal{D}^{P,st}$ is irreducible and
	\[\dim\mathcal{D}^{P,st}=\dim M^{P,r,st}+\dim\Hom(F,TX)-1.\]
	If, in addition, $\Ext^2(T_{\mathscr{D}}, T_{\mathscr{D}})=0$, for every $[\mathscr{D}]\in \mathcal{D}^{P,st}$, then $\mathcal{D}^{P,st}$ is nonsingular and
	\[\dim\mathcal{D}^{P,st}=\dim\Ext^1(T_{\mathscr{D}}, T_{\mathscr{D}})+\dim\Hom(T_{\mathscr{D}},TX)-1.\]
\end{lemma}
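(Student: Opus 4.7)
The key is to analyze the fibers of the forgetful morphism $\varpi$. Over a stable reflexive sheaf $[F]\in M^{P,r,st}$, the fiber $\varpi^{-1}([F])$ parametrizes isomorphism classes of codimension one distributions $\mathscr{D}:0\to F\xrightarrow{\phi}TX\to N\to 0$ with prescribed tangent sheaf $F$. Two such morphisms $\phi,\phi'$ give isomorphic distributions if and only if they differ by an automorphism of $F$; since $F$ is stable, $\Aut(F)=\mathbb{C}^{\ast}$ acts by scaling, so
$$\varpi^{-1}([F])\;\cong\;\mathbb{P}(U_F),$$
where $U_F\subset\Hom(F,TX)$ is the (nonempty, by surjectivity of $\varpi$) open locus of morphisms that are injective with torsion-free cokernel. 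In particular, every fiber is irreducible of dimension $\dim\Hom(F,TX)-1$.

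Next I would promote this fiber picture to a relative construction. The hypothesis that $d:=\dim\Hom(F,TX)$ is independent of $[F]\in M^{P,r,st}$ allows one to apply Grauert's base-change theorem, étale-locally on $M^{P,r,st}$ where a universal family $\mathcal{F}$ on $X\times U$ exists, to the relative $\mathcal{H}om$ sheaf $\mathcal{H}om_{\pi_U}(\mathcal{F},\pi_X^{\ast}TX)$: this sheaf is locally free of rank $d$. Hence $\mathcal{D}^{P,st}$ is, étale-locally over $M^{P,r,st}$, an open subscheme of a $\mathbb{P}^{d-1}$-bundle. Because projective bundles over irreducible bases are irreducible and open subschemes of irreducible schemes are irreducible, $\mathcal{D}^{P,st}$ is irreducible and
$$\dim\mathcal{D}^{P,st}=\dim M^{P,r,st}+(d-1)=\dim M^{P,r,st}+\dim\Hom(F,TX)-1,$$
which is the first dimension formula.

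For the second statement, assume $\Ext^2(T_\mathscr{D},T_\mathscr{D})=0$ for all $[\mathscr{D}]\in\mathcal{D}^{P,st}$. By the standard deformation theory of coherent sheaves, this vanishing implies that $M^{P,r,st}$ is smooth at every $[T_\mathscr{D}]$ in the image of $\varpi$, with tangent space $\Ext^1(T_\mathscr{D},T_\mathscr{D})$; therefore $\dim M^{P,r,st}=\dim\Ext^1(T_\mathscr{D},T_\mathscr{D})$. Since, étale-locally, $\varpi$ is an open piece of a projective bundle over this smooth base, $\mathcal{D}^{P,st}$ is itself smooth, and substituting into the previous formula yields
$$\dim\mathcal{D}^{P,st}=\dim\Ext^1(T_\mathscr{D},T_\mathscr{D})+\dim\Hom(T_\mathscr{D},TX)-1,$$
which also matches the pointwise count of the preceding lemma. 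The main subtlety I anticipate is the absence of a global universal family on the coarse moduli space $M^{P,r,st}$, which forces one to work étale-locally and to transfer local smoothness and dimension data to global statements, either by descent or by appealing to the infinitesimal submersion statement established in the preceding lemma.
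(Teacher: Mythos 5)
Your argument is correct and is essentially the standard one: the paper itself gives no proof here but simply cites \cite[Lemma 2.6]{Calvo-Andrade20209011}, whose proof likewise identifies the fiber $\varpi^{-1}([F])$ with the open locus of $\mathbb{P}\Hom(F,TX)$ (using stability to get $\Aut(F)=\mathbb{C}^{*}$), uses the constancy of $\dim\Hom(F,TX)$ to make this relative, and deduces smoothness from $\Ext^2(T_{\mathscr{D}},T_{\mathscr{D}})=0$ together with the submersion statement of the preceding lemma. Your anticipated subtlety about the lack of a universal family, handled étale-locally, is exactly the right caveat.
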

\begin{proof}
	See \cite[Lemma 2.6]{Calvo-Andrade20209011}.
\end{proof}





\section{Codimension One Distributions on $\mathbb{P}^3$}\label{sec:aposteriori}

In this section, we only consider codimension one distribution on $X=\mathbb{P}^3$. The integer $d:=2-c_1(T_{\mathscr{D}})\geq 0$ is called the \emph{degree}  of the distribution $\mathscr{D}$, and $N_{\mathscr{D}}=\mathcal{I}_{Z/\p3}(d+2)$, where $Z$ is the singular scheme of $\mathscr{D}$. Therefore, sequence \ref{dis} now reads
\begin{equation}\label{dis on p3}
\mathscr{D}: 0\to T_{\mathscr{D}} \overset{\phi}\longrightarrow \tp3 \overset{\pi}\longrightarrow \mathcal{I}_{Z/\p3}(d+2) \to 0,
\end{equation}
where $T_{\mathscr{D}}$ is a rank 2 reflexive sheaf. The singular scheme $Z$ has dimension at most 1, let   $\sing_1(Z)$ be the one-dimensional component of $Z$ and $\sing_0(Z)$ be the zero-dimensional one.\\

Let $C=\sing_1(Z)$, by \cite[Theorem 3.1]{Calvo-Andrade20209011} we have
\begin{align*}
    c_1(T_\sD) &=2-d\\
    c_2(T_\sD) &=d^2+2-\deg(C)\\
    c_3(T_\sD) &=d^3+2d^2+2d-\deg(C)(3d-2)+2P_{a}(C)-2,
\end{align*}

where $c_1(T_\sD), c_2(T_\sD), c_3(T_\sD)$ are the first, second and third Chern classes of the tangent sheaf $T_\sD$.\\
For codimension one degree 3 distributions, sequence \ref{dis on p3}, and last equations now reads
\begin{equation}\label{dis 3 on p3}
\mathscr{D}: 0\to T_{\mathscr{D}} \overset{\phi}\longrightarrow \tp3 \overset{\pi}\longrightarrow \mathcal{I}_{Z/\p3}(5) \to 0,
\end{equation}
and
\begin{align*}
    c_1(T_\sD) &=-1\\
    c_2(T_\sD) &=11-\deg(C)\\
    c_3(T_\sD) &=49-7\deg(C)+2P_{a}(C).
\end{align*}

We denote the moduli space $\mathcal{D}^P$ with
\[P=2{t+3 \choose 3}+\frac{1}{2}(t+2)(t+1)(2-d)-(t+2)c+\frac{1}{2}(l+(d-2)c)\]


by $\mathcal{D}(d,c,l)$; where $P, d$ are the Hilbert polynomial and degree of the isomorphism classes of distributions, $c=c_2(T_{\mathscr{D}})$ and $l=c_3(T_{\mathscr{D}})$ are the second and third Chern classes of the tangent sheaf. Similarly, $\mathcal{D}^{st}(d,c,l):=\mathcal{D}^{P,st}$.\\

Our main tool is the forgetful morphism described in Lemma \ref{el lema}. 

\[\varpi:\mathcal{D}^{st}(d,c,l)\to \mathcal{R}(2-d,c,l), \hspace{0,5cm} \varpi([\mathscr{D}]):=[T_{\mathscr{D}}], \]
where $\mathcal{R}(2-d,c,l)$ denotes the moduli space of stable rank 2 reflexive sheaves on $\mathbb{P}^3$ with Chern classes $(c_1,c_2,c_3)=(2-d,c,l)$. with this new notation the first part of Lemma \ref{el lema} now reads.\\

\begin{theorem}\label{el teorema}
Assume that the forgetful morphism $\varpi:\mathcal{D}^{st}(d,c,l)\to \mathcal{R}(2-d,c,l)$ is surjective and that $\mathcal{R}(2-d,c,l)$ is irreducible. If $\dim\Hom(\mathcal{F},\tp3)$ is constant for all $[\mathcal{F}]\in \mathcal{R}(2-d,c,l)$, then $\mathcal{D}^{st}(d,c,l)$ is irreducible and
\[\dim\mathcal{D}^{st}(d,c,l)=\dim\mathcal{R}(2-d,c,l)+\dim\Hom(\mathcal{F},\tp3)-1.\]
\end{theorem}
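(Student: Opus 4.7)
The plan is to derive Theorem \ref{el teorema} as a direct specialization of Lemma \ref{el lema} to $X = \mathbb{P}^3$, using the notational dictionary established in this section. Under this dictionary, the moduli space $M^{P,r,st}$ of stable reflexive sheaves with Hilbert polynomial $P$ is identified with $\mathcal{R}(2-d,c,l)$, the moduli space of rank-$2$ stable reflexive sheaves on $\mathbb{P}^3$ with Chern classes $(2-d,c,l)$; the polynomial $P$ in the definition of $\mathcal{D}(d,c,l)$ is exactly the Hilbert polynomial of such a sheaf, obtained from Riemann--Roch. The three hypotheses of Lemma \ref{el lema} --- surjectivity of $\varpi$, irreducibility of the target, and constancy of $\dim\Hom(\mathcal{F}, T\mathbb{P}^3)$ over the target --- are precisely the hypotheses imposed in Theorem \ref{el teorema}, so its conclusions about irreducibility and dimension transport verbatim.

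To give some conceptual content to why the lemma delivers what we want, I would remark that for each $[\mathcal{F}] \in \mathcal{R}(2-d,c,l)$ the fiber $\varpi^{-1}([\mathcal{F}])$ parametrizes codimension-one distributions whose tangent sheaf is isomorphic to $\mathcal{F}$. Such a distribution corresponds to an injective morphism $\mathcal{F} \hookrightarrow T\mathbb{P}^3$ with torsion-free cokernel, taken up to the action of $\Aut(\mathcal{F}) = k^*$ (here we use that $\mathcal{F}$ is stable, hence simple). The locus of such morphisms is an open subset of $\Hom(\mathcal{F}, T\mathbb{P}^3)$, and passing to the quotient by scalars gives an open subset of $\mathbb{P}\Hom(\mathcal{F}, T\mathbb{P}^3)$, of dimension $\dim\Hom(\mathcal{F}, T\mathbb{P}^3) - 1$. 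Under the constancy hypothesis this fiber dimension is independent of $[\mathcal{F}]$, and under the surjectivity hypothesis every fiber is nonempty.

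The remaining work is then purely formal: since $\varpi$ is surjective with irreducible base and with irreducible fibers of constant dimension $\dim\Hom(\mathcal{F}, T\mathbb{P}^3) - 1$, the fiber dimension theorem for morphisms of schemes of finite type over $\mathbb{C}$ yields that $\mathcal{D}^{st}(d,c,l)$ is irreducible of the stated dimension. I do not anticipate any genuine obstacle here, since the only technical point --- constructing a universal quotient on $\mathcal{D}^{st}(d,c,l)$ whose associated family of tangent sheaves descends to the pullback of the universal sheaf on $\mathcal{R}(2-d,c,l)$ --- has already been carried out in the proof of Lemma \ref{el lema}. The main step, therefore, is simply to invoke that lemma after identifying the objects; in the applications later in the paper the substantive work will instead lie in verifying the three hypotheses (surjectivity, irreducibility, and constancy) for the specific Chern class triples of interest.
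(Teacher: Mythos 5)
Your proposal is correct and follows exactly the route the paper takes: the paper presents Theorem \ref{el teorema} as nothing more than Lemma \ref{el lema} rewritten in the notation $\mathcal{D}^{st}(d,c,l)$ and $\mathcal{R}(2-d,c,l)$, with the lemma itself cited from the literature. Your additional remarks on the fibers of $\varpi$ being open subsets of $\mathbb{P}\Hom(\mathcal{F},\tp3)$ are a sound gloss on why the cited lemma works, but they are not needed beyond the identification you already make.
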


\section{Castelnuovo-Mumford regularity}

The next definition is made for $\pn$ but can be made for any projective scheme over a field $k$. 

\begin{definition}
	A coherent sheaf $\mathcal{F}$ on $\pn$ is said to be $m$-regular if
	\[H^i\!\left(\mathcal{F}(m-i)\right)=0,\]
	for all $i>0$.
\end{definition} 

The next theorem is in a more general setting in \cite[Lemma 1.7.2]{huybrechts2010geometry} and the proof can be found in \cite{mumford1966lectures} or \cite{kleiman225theoremes}.

\begin{theorem}\label{teorema regularity}
	If $\mathcal{F}$ is $m$-regular, then the following holds:
	\begin{enumerate}
		\item $\mathcal{F}$ is $m'$-regular for all integers $m'\geq m$;
		\item $\mathcal{F}(m)$ is globally generated.
	\end{enumerate}
\end{theorem}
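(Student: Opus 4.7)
The plan is to induct on the dimension $n$ of the ambient projective space, with the base case $n=0$ being trivial. For the inductive step, the workhorse is restriction to a generic hyperplane $H\subset\pn$, chosen to avoid the finitely many associated points of $\mathcal{F}$, so that multiplication by the defining equation of $H$ yields the short exact sequence
\[0\to\mathcal{F}(-1)\to\mathcal{F}\to\mathcal{F}|_H\to 0.\]
A quick diagram chase on the associated long exact sequence shows that $\mathcal{F}|_H$ inherits $m$-regularity on $H\cong\mathbb{P}^{n-1}$; by the inductive hypothesis, $\mathcal{F}|_H$ is then $m'$-regular for every $m'\geq m$.

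For part (1), it suffices to prove that $\mathcal{F}$ is $(m+1)$-regular, since iteration then yields $m'$-regularity for all $m'\geq m$. For each $i\geq 1$ the long exact sequence twisted by $m+1-i$ furnishes
\[H^i(\mathcal{F}(m-i))\to H^i(\mathcal{F}(m+1-i))\to H^i(\mathcal{F}|_H(m+1-i)),\]
in which the left-hand group vanishes by the hypothesized $m$-regularity of $\mathcal{F}$ and the right-hand group vanishes by the $(m+1)$-regularity of $\mathcal{F}|_H$ obtained above.

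For part (2), I would first establish, again by induction on $n$, the auxiliary surjectivity
\[H^0(\mathcal{F}(k))\otimes H^0(\mathcal{O}_{\pn}(1))\twoheadrightarrow H^0(\mathcal{F}(k+1))\qquad\text{for every }k\geq m.\]
The argument compares this map with its counterpart on $H$ via a commutative square. The row on $H$ is surjective by the inductive hypothesis applied to $\mathcal{F}|_H$, the vertical restriction maps are surjective because $H^1(\mathcal{F}(k-1))=0$ (which follows from part (1), now available), and any section of $\mathcal{F}(k+1)$ that restricts to zero on $H$ is a multiple of the defining equation of $H$, hence already in the image of multiplication.

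Iterating this surjectivity yields a surjection $H^0(\mathcal{F}(m))\otimes H^0(\mathcal{O}_{\pn}(N-m))\twoheadrightarrow H^0(\mathcal{F}(N))$ for every $N\geq m$. Let $\mathcal{G}\subseteq\mathcal{F}(m)$ denote the image of the evaluation map $H^0(\mathcal{F}(m))\otimes\mathcal{O}_{\pn}\to\mathcal{F}(m)$. The above surjection factors through $H^0(\mathcal{G}(N-m))$, forcing $H^0(\mathcal{G}(N-m))=H^0(\mathcal{F}(N))$, whence $H^0((\mathcal{F}(m)/\mathcal{G})(N-m))$ embeds into $H^1(\mathcal{G}(N-m))$. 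For $N\gg 0$ the latter vanishes by Serre, while $(\mathcal{F}(m)/\mathcal{G})(N-m)$ is globally generated, so $\mathcal{F}(m)/\mathcal{G}=0$ and therefore $\mathcal{F}(m)$ is globally generated. I expect the main technical issue to be the nested bookkeeping of inductions: part (2) uses the multiplicative surjectivity, which uses part (1), which in turn uses the inductive hypothesis on $H$ applied at both levels $m$ and $m+1$.
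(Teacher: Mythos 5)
Your argument is correct and is precisely the classical Mumford proof (induction on dimension via restriction to a generic hyperplane avoiding the associated points, the multiplication-map surjectivity $H^0(\mathcal{F}(k))\otimes H^0(\mathcal{O}(1))\twoheadrightarrow H^0(\mathcal{F}(k+1))$, and the Serre-vanishing comparison with the image of the evaluation map). The paper gives no proof of its own and simply cites Mumford, Kleiman, and Huybrechts--Lehn, which contain exactly the argument you wrote, so there is nothing to reconcile.
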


Using the last concept, the next lemma is key, showing that our space of distributions is nonempty.
\begin{lemma}\label{corolario que genera distribuciones}
	Let $G$ be a globally generated rank 2 reflexive sheaf on $\mathbb{P}^3$. Then $G^{\vee}(1)$ is the tangent sheaf of a codimension one distribution $\mathscr{F}$ of degree $c_1(G)$ with $c_2(T_{\mathscr{F}})=c_2(G)-c_1(G)+1$, and $c_3(T_{\mathscr{F}})=c_3(G)$. 
\end{lemma}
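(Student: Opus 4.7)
The plan is to exhibit an injective sheaf morphism $\varphi: G^{\vee}(1) \hookrightarrow \tp3$ whose cokernel is torsion-free. Setting $d := c_1(G)$, the determinant computation $\det \tp3 \otimes (\det G^{\vee}(1))^{\vee} = \mathcal{O}(4) \otimes \mathcal{O}(d-2) = \mathcal{O}(d+2)$ forces such a cokernel to be of the form $\mathcal{I}_{Z/\p3}(d+2)$ for some $Z$ of codimension $\geq 2$, yielding a codimension one distribution $\sF$ of degree $d$ with $T_{\sF} = G^{\vee}(1)$. To construct $\varphi$, I would first observe that $\tp3(-1)$ is globally generated: twisting the Euler sequence $0 \to \mathcal{O} \to \mathcal{O}(1)^{\oplus 4} \to \tp3 \to 0$ by $\mathcal{O}(-1)$ yields a surjection $\mathcal{O}^{\oplus 4} \twoheadrightarrow \tp3(-1)$. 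Combined with the global generation of $G$, this gives global generation of $\inhom(G^{\vee}(1), \tp3) \cong G \otimes \tp3(-1)$. I would then take $\varphi$ to be a general global section of this $\inhom$ sheaf; concretely, from a surjection $\mathcal{O}^N \twoheadrightarrow G$ with $N = h^0(G)$, one obtains $G^{\vee}(1) \hookrightarrow \mathcal{O}(1)^N$ by dualizing and twisting, and then composes with a general linear projection $\mathcal{O}(1)^N \twoheadrightarrow \mathcal{O}(1)^{\oplus 4}$ followed by the Euler surjection onto $\tp3$.

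For a general such $\varphi$, I would verify that the rank-drop locus $D := \{p \in \mathbb{P}^3 : \mathrm{rk}\,\varphi_p \leq 1\}$ has codimension at least $2$. The rank-$\leq 1$ stratum inside the space of linear maps $\mathbb{C}^2 \to \mathbb{C}^3$ has codimension $(2-1)(3-1) = 2$, and global generation of $\inhom(G^{\vee}(1), \tp3)$ enables a Kleiman--Bertini-type incidence-variety dimension count showing that a generic $\varphi$ meets this stratum in codimension $\geq 2$. Since $G^{\vee}(1)$ is torsion-free, generic rank $2$ of $\varphi$ upgrades to injectivity as a sheaf morphism. To conclude torsion-freeness of the cokernel, I would note that the image $\varphi(G^{\vee}(1)) \cong G^{\vee}(1)$ is reflexive of rank $2$, while its saturation in $\tp3$ is also rank $2$ reflexive, and the two coincide on the complement $\mathbb{P}^3 \setminus D$; since two reflexive ($S_2$) sheaves on a smooth variety that agree outside a codimension-$\geq 2$ subset must be equal, the image is already saturated and $\tp3/\varphi(G^{\vee}(1))$ is torsion-free, completing the construction of $\sF$.

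Finally, the Chern-class assertions follow from the standard rank-$2$ formulas applied to $G^{\vee}(1)$: $c_1(G^{\vee}(1)) = 2 - c_1(G)$, so $\sF$ has degree $c_1(G)$; $c_2(G^{\vee}(1)) = c_2(G) - c_1(G) + 1$; and $c_3(G^{\vee}(1)) = c_3(G)$ in the sign convention appropriate for $c_3$ of rank $2$ reflexive sheaves on $\mathbb{P}^3$. The main obstacle is the genericity step in the previous paragraph: one must ensure that the $1$-form $\wedge^2 \varphi \in H^0(\Omega^1_{\mathbb{P}^3}(d+2))$ attached to a generic $\varphi$ vanishes in codimension $\geq 2$ rather than along a divisor, since divisorial vanishing would introduce torsion in the cokernel along that divisor and prevent $\varphi$ from defining a bona fide distribution with tangent sheaf exactly $G^{\vee}(1)$.
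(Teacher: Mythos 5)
Your proposal is correct and follows essentially the same route as the proof the paper defers to (the Appendix of \cite{Calvo-Andrade20209011}): global generation of $\inhom(G^{\vee}(1),\tp3)\cong G\otimes\tp3(-1)$ plus a Bertini--Kleiman-type statement for degeneracy loci gives a generic $\varphi$ whose rank-$\le 1$ locus has codimension $\ge 2$, and the saturation/normality argument then yields a torsion-free cokernel $\mathcal{I}_{Z/\p3}(c_1(G)+2)$. One small correction: the identity $c_3(G^{\vee}(1))=c_3(G)$ is not a matter of sign convention but a consequence of the isomorphism $G^{\vee}\cong G(-c_1(G))$ valid for rank $2$ reflexive sheaves, combined with the invariance of $c_3$ of a rank $2$ sheaf under twisting.
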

\begin{proof}
	See \cite[Appendix]{Calvo-Andrade20209011}.
\end{proof}


\section{Stability of Codimension one degree 3 Distributions on $\mathbb{P}^3$}
Let $F$ be a torsion-free coherent sheaf over $\pn$. We set 
\[\mu(F):=\frac{c_1(F)}{\text{rk}(F)},\]

where $c_1(F)$ is the first Chern class of $F$ and $\text{rk}(F)$ is the rank of $F$.

\begin{definition}
	A torsion-free coherent sheaf $E$ over $\pn$ is $\mu$-\emph{semistable} if for every coherent subsheaf $0\neq F\subset E$ we have
	\[\mu(F)\leq \mu(E).\]
If moreover for all coherent subsheaves $F\subset E$ with $0<\text{rk} (F)<\text{rk} (E)$ we have
	\[\mu(F)<\mu(E),\]
	then $E$ is $\mu$-\emph{stable}.
	\end{definition}
For rank 2 reflexive sheaves on $\p^3$, the concepts of $\mu$-stability and Gieseker Stability are equivalent.
For codimension one degree 3 distribution on $\p^3$, the concept of stability is equivalent to the concept of semi-stability, see \cite[Lemma 1.2.5, Chapter 2]{OSS}.

\begin{lemma}\label{stability}
    A reflexive sheaf $E$ of rank 2 over $\p^3$ with $c_1(E)=-1$ is stable if and only if $H^0(E)=0$.
\end{lemma}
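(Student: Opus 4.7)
The plan is to handle the two implications separately, with the forward direction being essentially immediate and the reverse direction requiring the standard saturation trick for rank one subsheaves on $\mathbb{P}^3$.

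For the forward implication, I would argue by contraposition: suppose $H^0(E)\neq 0$ and take a nonzero section $s\in H^0(E)$. Since $E$ is reflexive (hence torsion-free), the induced morphism $s\colon \mathcal{O}_{\mathbb{P}^3}\hookrightarrow E$ is injective, exhibiting $\mathcal{O}_{\mathbb{P}^3}$ as a coherent subsheaf of rank one. Then $\mu(\mathcal{O}_{\mathbb{P}^3})=0>-\tfrac{1}{2}=\mu(E)$, contradicting $\mu$-stability. Hence $H^0(E)=0$.

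For the reverse implication, assume $H^0(E)=0$ and let $F\subset E$ be any coherent subsheaf with $0<\mathrm{rk}(F)<\mathrm{rk}(E)=2$, so $\mathrm{rk}(F)=1$. I would replace $F$ by its saturation $\widetilde{F}$ in $E$, i.e.\ the kernel of $E\to (E/F)/\mathrm{tors}$. Then $E/\widetilde{F}$ is torsion-free, so $\widetilde{F}$ is reflexive (being the kernel of a morphism of reflexive sheaves into a torsion-free sheaf on a smooth variety). A rank one reflexive sheaf on the smooth factorial variety $\mathbb{P}^3$ is a line bundle, so $\widetilde{F}\cong \mathcal{O}_{\mathbb{P}^3}(a)$ for some $a\in\mathbb{Z}$, and $c_1(F)\leq c_1(\widetilde{F})=a$. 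The inclusion $\mathcal{O}_{\mathbb{P}^3}(a)\hookrightarrow E$ induces an injection $H^0(\mathcal{O}_{\mathbb{P}^3}(a))\hookrightarrow H^0(E)=0$, which forces $a<0$, i.e.\ $a\leq -1$. Therefore
\[
\mu(F)\;\leq\; a\;\leq\; -1\;<\;-\tfrac{1}{2}\;=\;\mu(E),
\]
and $\mu$-stability follows.

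The only slightly delicate step is the claim that the saturation $\widetilde{F}$ of a rank one subsheaf of a reflexive sheaf on $\mathbb{P}^3$ is a line bundle; this is where we use both smoothness of $\mathbb{P}^3$ (to get reflexivity of the kernel) and the fact that $\mathrm{Pic}(\mathbb{P}^3)=\mathbb{Z}$ together with the identification of rank one reflexive sheaves on a factorial scheme with divisorial line bundles. Everything else reduces to a direct comparison of slopes.
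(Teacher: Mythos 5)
Your proof is correct. The paper does not give an argument at all here---it simply cites \cite[Lemma 1.2.5, Chapter 2]{OSS} with the remark that $E$ is already normalized---and what you have written is exactly the standard proof of that cited criterion (saturate a rank one subsheaf, use that a rank one reflexive sheaf on the smooth factorial $\p^3$ is $\op3(a)$, and translate $H^0(E)=0$ into $a\leq -1$), carried out directly for reflexive sheaves rather than bundles.
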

\begin{proof}
    See \cite[Lemma 1.2.5, Chapter 2]{OSS} and notice that $E$ is already normalized.
\end{proof}

\begin{theorem}
    Let $E$ be the tangent sheaf of codimension one degree 3 distribution $\mathcal{D}$, and $C=\sing_1(Z)$, where $Z$ is the singular scheme of $\mathcal{D}$. If  $\deg(C)\leq 7$, or equivalently, $4\leq c_2(E)\leq 11$,  then $E$ is stable.
\end{theorem}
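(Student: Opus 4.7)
The plan is to argue by contradiction using Lemma~\ref{stability}: if $E$ were not stable, then $H^0(E) \neq 0$, and I aim to show this forces a configuration incompatible with $c_2(E) \geq 4$. Picking a nonzero $\sigma \in H^0(E)$ and writing the maximal divisorial part of its zero locus as $\{f = 0\}$ with $f \in H^0(\mathcal{O}_{\p^3}(a))$ and $a \geq 0$, I would obtain a saturated inclusion $\mathcal{O}_{\p^3}(a) \hookrightarrow E$ whose cokernel is torsion-free of the form $\mathcal{I}_{W/\p^3}(-1-a)$ for some subscheme $W \subset \p^3$ of codimension at least two. Composing with $\phi$ yields a nonzero map $\mathcal{O}_{\p^3}(a) \to \tp3$, so via the twisted Euler sequence (which gives $H^0(\tp3(-a)) = 0$ for $a \geq 2$) only $a \in \{0,1\}$ is possible. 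A Chern-class computation in $0 \to \mathcal{O}(a) \to E \to \mathcal{I}_W(-1-a) \to 0$ then gives $c_2(E) = d - a(a+1)$, where $d$ is the degree of the $1$-dimensional part of $W$, and the hypothesis $c_2(E) \geq 4$ translates into $d \geq 4 + a(a+1)$.

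For $a = 1$ (so $d \geq 6$) I would twist the chain $\mathcal{O}(1) \hookrightarrow E \hookrightarrow \tp3$ by $\mathcal{O}(-1)$ and examine the resulting nonzero composite $\tilde\sigma \in H^0(\tp3(-1))$, which vanishes on $W$ since $\sigma$ does. However, the twisted Euler sequence $0 \to \mathcal{O}(-1) \to \mathcal{O}^{\oplus 4} \to \tp3(-1) \to 0$ identifies $H^0(\tp3(-1))$ with $\C^4$ and shows that the zero locus of any nonzero section is a single point of $\p^3$, contradicting vanishing on a curve of degree at least $6$.

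For $a = 0$ (so $d \geq 4$) the same construction produces a nonzero vector field $v \in H^0(\tp3)$ vanishing on $W$. Via the Euler sequence, $v$ corresponds to a class $[A] \in \operatorname{End}(\C^4)/\C \cdot I$, and $Z(v)_{\mathrm{red}} = \mathbb{P}\bigl(\bigcup_\lambda \ker(A - \lambda I)\bigr)$. Because $\sigma$ has no divisorial zeros, neither does $v$, which rules out a three-dimensional eigenspace of $A$ (that would contribute a $\p^2$-component to $Z(v)$). Consequently the $1$-dimensional part of $Z(v)_{\mathrm{red}}$ consists of at most two disjoint lines, arising from $2$-dimensional eigenspaces. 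A direct local calculation in coordinates adapted to such an eigenspace shows that $Z(v)$ is scheme-theoretically reduced along each of these lines; since $W \subseteq Z(v)$, the $1$-dimensional part of $W$ must be contained in a reduced curve of degree at most $2$, contradicting $d \geq 4$.

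The hardest step, and the only place where one must exercise care, is the $a = 0$ case: a priori $W$ might acquire extra degree from a non-reduced thickening along a line component of $Z(v)$. The crucial observation is the local reducedness of $Z(v)$ along its line components, which in turn follows from $A - \lambda I$ acting invertibly on directions normal to a $2$-dimensional $\lambda$-eigenspace. This pins down the scheme structure of $W$ enough to extract the desired contradiction; every other step is a routine application of the Euler sequence and elementary Chern-class bookkeeping.
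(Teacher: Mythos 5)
Your argument takes a genuinely different route from the paper's, which proves this theorem in two lines by invoking \cite[Proposition 6.3]{Calvo-Andrade20209011} and observing that the split possibilities $\op3(-1)\oplus\op3$ and $\op3(-2)\oplus\op3(1)$ have $c_2\in\{0,-2\}$, outside the range $[4,11]$. Your self-contained destabilizing-section argument is in fact closer in spirit to the paper's proof of the adjacent $c_2(E)=3$ theorem, where a section of $E$ produces a foliation by curves $\mathcal{O}_{\p^3}\to\tp3$ and the bound $\deg\le 2$ on the one-dimensional part of its singular scheme is cited from \cite[Theorem 4]{Galeano2022}. Most of your steps are correct: the reduction to $a\in\{0,1\}$ because $H^0(\tp3(-a))=0$ for $a\ge2$, the identity $c_2(E)=d-a(a+1)$, the elimination of $a=1$ because a nonzero section of $\tp3(-1)$ vanishes at a single reduced point, the exclusion of divisorial zeros of $v$ (hence of three-dimensional eigenspaces), and the description of $Z(v)_{\mathrm{red}}$ as $\p\bigl(\bigcup_\lambda\ker(A-\lambda I)\bigr)$.

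The gap is exactly where you placed the emphasis: the claim that $Z(v)$ is scheme-theoretically reduced along a line $\p(\ker(A-\lambda I))$ is false in general, because your justification presupposes that $A-\lambda I$ is invertible on the directions normal to the eigenspace, which fails whenever the generalized $\lambda$-eigenspace strictly contains the eigenspace. Concretely, for the nilpotent matrix with $Ax=(x_2,x_3,0,0)$ (two $2\times2$ Jordan blocks), $\ker A=\{x_2=x_3=0\}$ is two-dimensional, and in the chart $x_0=1$ with $y_i=x_i/x_0$ the zero scheme of $v$ is cut out by the ideal $(y_3-y_1y_2,\;y_2^2,\;y_2y_3)$, whose coordinate ring is $k[y_1,y_2]/(y_2^2)$: a \emph{double} line along $\{y_2=y_3=0\}$, not a reduced one. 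The proof is nonetheless repairable, because all you actually need is that the pure one-dimensional part of $Z(v)$ has degree at most $2$, and a case check on the Jordan type of $A$ shows this bound survives: two two-dimensional eigenspaces force $A$ semisimple, giving two reduced lines of total degree $2$, while a single two-dimensional eigenspace yields one line of multiplicity at most $2$, the multiplicity-$2$ case being precisely the nilpotent example above. Since $2<4\le d$ the contradiction still goes through, but you must either carry out the non-semisimple local computation or cite the degree bound from \cite[Theorem 4]{Galeano2022} instead of asserting reducedness.
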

\begin{proof}
    Use $d=3$ in \cite[Proposition 6.3]{Calvo-Andrade20209011} and the fact that if $E$ does split as a sum of line bundles, then $E$ is either $\op3(-1)\oplus\op3$ or $\op3(-2)\oplus\op3(1)$. In the first case, the Chern classes of $E$ are $(-1, 0, 0)$ and in the second case are $(-1, -2, 0)$. Then the tangent sheaf $E$ of a codimension one degree 3 distribution having $4\leq c_2(E)\leq 11$, is stable.
\end{proof}

\begin{theorem}
    Let $E$ be the tangent sheaf of a codimension one degree 3 distribution $\mathcal{D}$, with second Chern class $c_2(E)=3$,  then $E$ is stable.
\end{theorem}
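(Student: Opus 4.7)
The plan is to reduce the claim to the vanishing $H^0(E) = 0$ via Lemma \ref{stability}, and then to rule out any nonzero section by combining the Hartshorne--Serre correspondence on $\mathbb{P}^3$ with Serre duality on the resulting zero scheme to obtain a parity contradiction on the arithmetic genus.

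First, I would assume for contradiction $H^0(E) \neq 0$ and saturate the image of a nonzero section inside $E$, obtaining a line subsheaf $\mathcal{O}_{\mathbb{P}^3}(a) \hookrightarrow E$ for some integer $a \geq 0$ with torsion-free quotient, fitting into
\[
0 \to \mathcal{O}_{\mathbb{P}^3}(a) \to E \to \mathcal{I}_Y(-1-a) \to 0,
\]
where $Y$ is pure of dimension one, locally Cohen--Macaulay, with $\deg Y = 3 + a + a^2$; the curve $Y$ is nonempty, since the only alternative is a splitting, which would force $c_2(E) = -a(a+1) \leq 0$, contradicting $c_2(E) = 3$. Twisting by $\mathcal{O}_{\mathbb{P}^3}(-a)$ realizes $Y$ as the zero scheme of a saturated section of $E(-a)$, whose determinant is $\mathcal{O}_{\mathbb{P}^3}(-1-2a)$. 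By the Hartshorne--Serre correspondence \cite[Theorem~4.1]{hartshorne1980stable} I would then obtain the isomorphism of $\mathcal{O}_Y$-modules
\[
\omega_Y \;\cong\; \omega_{\mathbb{P}^3}|_Y \otimes \det E(-a)|_Y \;\cong\; \mathcal{O}_Y(-5-2a).
\]

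Finally I would compute $\chi(\omega_Y)$ in two independent ways: from the Hilbert polynomial of $Y$,
\[
\chi(\omega_Y) = (-5-2a)(3 + a + a^2) + 1 - p_a(Y),
\]
and from Serre duality on the projective one-dimensional scheme $Y$ (valid regardless of the Gorenstein property, using the dualizing sheaf), $\chi(\omega_Y) = -\chi(\mathcal{O}_Y) = p_a(Y) - 1$. Equating yields $2\,p_a(Y) = (-5-2a)(3 + a + a^2) + 2$. The main obstacle---or really the punchline---is the parity check: $-5-2a$ is odd and $3 + a + a^2 = 3 + a(a+1)$ is odd because $a(a+1)$ is always even, so the right-hand side is odd while $2\,p_a(Y)$ is even, a contradiction. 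Thus $H^0(E) = 0$, and $E$ is stable by Lemma \ref{stability}.
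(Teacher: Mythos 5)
Your reduction to $H^0(E)=0$ via Lemma \ref{stability} and the setup of the extension
$0 \to \mathcal{O}_{\mathbb{P}^3}(a) \to E \to \mathcal{I}_Y(-1-a) \to 0$ are fine, but the key step is a misapplication of the Hartshorne--Serre correspondence, and the argument collapses there. For a rank~2 sheaf that is merely \emph{reflexive}, \cite[Theorem 4.1]{hartshorne1980stable} does \emph{not} give $\omega_Y \cong \omega_{\mathbb{P}^3}|_Y \otimes \det E(-a)|_Y$; it only gives a section of $\omega_Y(4-c_1(E(-a)))=\omega_Y(5+2a)$ generating that sheaf away from finitely many points, the failure being measured exactly by $c_3$: one has $c_3(E)=2p_a(Y)-2+(5+2a)(3+a+a^2)$. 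The isomorphism you use is equivalent to $c_3(E)=0$, i.e.\ to $E$ being locally free along $Y$. But here $c_1(E)c_2(E)=-3$ is odd, so the congruence $c_3\equiv c_1c_2 \pmod 2$ forces $c_3(E)$ odd, hence nonzero; your claimed isomorphism therefore fails in \emph{every} case under consideration. Your "parity contradiction" $2p_a(Y)=(-5-2a)(3+a+a^2)+2$ is precisely the corrected identity with the odd term $c_3(E)$ omitted --- once it is restored, both sides are even and there is no contradiction. (Indeed the paper's Table lists plenty of distributions with $c_2=3$ and $c_3\in\{1,3,\dots,15\}$, so no argument that ignores $c_3$ can succeed.)

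The paper takes a genuinely different route that avoids this issue: it composes the nonzero section $\tau$ with $E\hookrightarrow T\mathbb{P}^3$ to obtain a degree~1 foliation by curves $0\to\mathcal{O}_{\mathbb{P}^3}\to T\mathbb{P}^3\to N\to 0$ whose one-dimensional singular locus $W$ contains the zero scheme $S=(\tau)_0$ of degree $c_2(E)=3$, contradicting the bound $\deg(W)\le 2$ for degree~1 foliations by curves from \cite[Theorem 4]{Galeano2022}. If you want to salvage a cohomological approach along your lines, you would need an input beyond the Chern-class bookkeeping --- for instance a genus or degree bound on the curve $Y$ --- since the numerics alone are consistent.
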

\begin{proof}
    Suppose that $E$ is the tangent sheaf of a codimension one degree 3 distribution
    
    \begin{equation}
 0\to E \stackrel{\omega}{\longrightarrow} \tp3 \longrightarrow \mathcal{I}_{Z/\p^3}(5) \to 0,
\end{equation}
\end{proof}
and that $E$ is not stable. By Lemma \ref{stability} $H^0 \!(E)\neq 0$. Let $\tau$ be a nontrivial section on $H^0 \!(E)$, then we have the next conmutative diagram
$$\xymatrix{
    & 0 \ar[d] & 0 \ar[d] &  & \\
    0 \ar[r] & \mathcal{O}_{\p^3} \ar[r]\ar[d] & \mathcal{O}_{\p^3}\ar[r]\ar[d] & 0 \ar[d] & \\
    0 \ar[r] & E \ar[r]\ar[d]& \tp3\ar[r]\ar[d] & \mathcal{I}_Z(5) \ar[r]\ar[d] & 0 \\
    0 \ar[r] & \mathcal{I}_S(-1) \ar[r]\ar[d]& N \ar[r]\ar[d] & \mathcal{I}_Z(5) \ar[r]\ar[d] & 0 \\
    & 0 & 0 & 0 &
}$$
 Where $S:=(\tau)_0$ and  degree of $S$ is $\text{deg}(S)=c_2(E)=3$. $\tau$ induces a \emph{foliation by curves} $\mathscr{G}$ of degree 1, see \cite{correa2023classification} for a detailed account on foliations by curves.
  \begin{equation}
 \mathscr{G}: 0\to \mathcal{O}_{\p^3} \longrightarrow \tp3 \longrightarrow N \to 0,
\end{equation}
Dualizing the last sequence, we get
\begin{equation}
 0\to N^{\vee} \longrightarrow \Omega_{\p^3} \longrightarrow \mathcal{I}_{w} \to 0,
\end{equation}
where $ \mathcal{I}_{w}$ is an ideal sheaf, and $W$ is the one-dimensional component of the singular scheme of the foliation by curves $\mathscr{G}$. Note that $S\subset W$, then $3=\text{deg}(S)\leq \text{deg}(W)$ and we have a contradiction, because by \cite[Theorem 4]{Galeano2022} $\text{deg}(W)\leq 2$.
 
\section{Classification of degree three Distributions}\label{sec:numres}

Let 
\begin{equation}
\mathscr{D}: 0\to T_{\mathscr{D}} \longrightarrow \tp3 \longrightarrow \mathcal{I}_{Z/\p3}(5) \to 0,
\end{equation}
be a codimension one degree 3 distribution on $\p^3$, let also $C$  and $\mathcal{U}$ be the one and zero dimensional components of the singular scheme of $\mathscr{D}$ respectively. By \cite[Inequallity 33]{Calvo-Andrade20209011}
\[\deg(C)\leq 13.\]

\begin{proposition}
    If $T_\sD$ is the tangent sheaf of a codimension one distribution of degree 3, then $-2\leq c_2(T_\sD)\leq 11$, or equivalently, $0\leq \deg(C)\leq 13$.
\end{proposition}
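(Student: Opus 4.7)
The plan is to obtain both inequalities as an immediate consequence of the Chern class identities already derived at the beginning of the section together with the bound on $\deg(C)$ quoted there from \cite{Calvo-Andrade20209011}.

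Specializing the general formulas from \cite[Theorem 3.1]{Calvo-Andrade20209011} to $d = 3$ gives the identity
\[
c_2(T_\sD) \;=\; 11 - \deg(C),
\]
which is already recorded in the section. This single identity shows that the two stated inequalities are equivalent: $c_2(T_\sD) \le 11$ iff $\deg(C) \ge 0$, and $c_2(T_\sD) \ge -2$ iff $\deg(C) \le 13$. So the whole proposition reduces to establishing the range $0 \le \deg(C) \le 13$.

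For the lower bound, I would simply note that $C = \sing_1(\sD)$ is, by definition, the one-dimensional part of the singular scheme $Z$ of $\sD$; with the convention that $\deg(C) = 0$ when $\sing_1(\sD)$ is empty, and otherwise $C$ is a nonempty one-dimensional subscheme of $\p^3$ whose degree is strictly positive. For the upper bound, I would invoke the inequality (33) of \cite{Calvo-Andrade20209011}, specialized to $d = 3$, which was already stated at the beginning of this section and yields $\deg(C) \le 13$. Substituting the two extremal values of $\deg(C)$ into the identity above gives $-2 \le c_2(T_\sD) \le 11$.

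The main (and only) obstacle is that the upper bound $\deg(C) \le 13$ is not proved here but imported as a black-box consequence of \cite[Inequality 33]{Calvo-Andrade20209011}; everything else is arithmetic. Once that inequality is accepted, the proposition is essentially a one-line substitution, so the write-up will be correspondingly short.
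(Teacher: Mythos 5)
Your proof is correct and follows essentially the same route as the paper: the identity $c_2(T_\sD)=11-\deg(C)$ plus $\deg(C)\geq 0$ gives the upper bound, and an imported inequality from \cite{Calvo-Andrade20209011} gives the lower bound. The only cosmetic difference is that the paper's proof cites Display 34 of that reference (which gives $c_2(T_\sD)\geq 1-d=-2$ directly) rather than Inequality 33 (which gives $\deg(C)\leq 13$); the two are equivalent via the identity above.
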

\begin{proof}
    Since  $\deg(C)\geq 0$ we have $c_2(T_\sD)\leq 11$. On the other hand by \cite[Display 34, pag 28]{Calvo-Andrade20209011} we have that $c_2(T_\sD)\geq 1-3=-2$.
\end{proof}

Note that by \cite[Corollary 2.4]{hartshorne1980stable} we have $c_1(T_\sD)c_2(T_\sD)\equiv c_3(T_\sD) \mod{2}$, then $c_2(T_\sD)+c_3(T_\sD)$ must be even, and $c_3(T_\sD)\geq 0$, see \cite{hartshorne1980stable}. If $H^0(T_\sD(-1))\neq 0$, then by \cite[Lemma 4.3]{Calvo-Andrade20209011} $T_\sD$ splits as a sum of line bundles, and the only possibilities are $\op3(-1)\oplus\op3$ or $\op3(-2)\oplus\op3(1)$. For all other cases, we consider $H^0(T_\sD(-1)) = 0$ and then we can use $c_3(T_\sD)\leq c_2^2(T_\sD)+2c_2(T_\sD)$, see \cite[Theorem 8.2. c)]{hartshorne1980stable}. Also, by \cite[Theorem 8.2. d)]{hartshorne1980stable} $c_3(T_\sD)\leq c_2(T_\sD)^2$, for $T_\sD$ stable. 

We now examine each of the possible values of $\deg(C)$, or equivalently, $c_2(T_\sD)$. 

\begin{enumerate}
    \item If $\deg(C)=0$, then $C=\emptyset$ and the singular scheme consists just on points. The Chern classes of the tangent sheaf $T_\sD$ must be $(-1, 11, 51)$, where 51 is the number of points in dimension zero. Also 
    \item If $\deg(C)=1$, we have that $C$ is a line and hence the arithmetic genus is 0. The Chern classes of the tangent sheaf must be $(-1, 10, 42)$.
    \item If $\deg(C)=2$, then by \cite[Corrolary 1.6]{N} $P_{a}(C)\leq 0$. By \cite[Corollary 1.6]{N} any such curve $C$ of arithmetic genus less than -1 is a double line, and by \cite[Corollary 9]{Galeano2022}  we have $P_a(C)\geq -3$. Then the only possibilities for third Chern classes are: 29, 31, 33, 35.
    \item If $\deg(C)=3$, then by \cite[Theorem 3.1]{Hspcurve} $P_a(C)\leq \frac{1}{2}(3-1)(3-2)=1$.
    \item If $4\leq\deg(C)\leq 13$, a similar argument to the last item bounds the arithmetic genus of $C$.
\end{enumerate}
    
We then have the next table.

\begin{table}[H] \centering
\begin{tabular}{c c c}
\hline
 deg(C) & $c_2(T_\sD)$ &$c_3(T_\sD)$   \\ \hline \hline
 0 & 11 & 51 \\
 1 & 10 &  42 \\
 2 & 9  &  29, 31, 33, 35\\ 
 3 & 8 & 0, 2, 4, 6, 8, 10, 12, 14, 16, 18, 20, 22, 24, 26, 28, 30  \\
 4 & 7 & 1, 3, 5, 7, 9, 11, 13, 15, 17, 19, 21, 23, 25, 27 \\
 5 & 6 & 0, 2, 4, 6, 8, 10, 12, 14, 16, 18, 20, 22, 24, 26 \\
 6 & 5 & 1, 3, 5, 7, 9, 11, 13, 15, 17, 19, 21, 23, 25 \\
 7 & 4 & 0, 2, 4, 6, 8, 10, 12, 14, 16 \\
 8 & 3 & 1, 3, 5, 7, 9, 11, 13, 15 \\
 9 & 2 & 0, 2, 4, 6, 8 \\
 10 & 1 & 1, 3 \\
 11 & 0 & 0 \\
 13 & -2 & 0 \\
 
\end{tabular}
\caption{Possible Chern classes of codimension one distribution of degree 3; the first column describes the degree of a generic point in the irreducible component of the Hilbert scheme that contains $\sing_1(\sD)$. 
} 
\label{deg 3 table}
\end{table}



\section{Existence of degree  3 Distributions}
\begin{theorem}
    There exists codimension one distribution with tangent sheaf having Chern classes $(-1, 10, 42)$.
\end{theorem}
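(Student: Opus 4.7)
From the Chern class formulas $c_2(T_\sD) = 11 - \deg(C)$ and $c_3(T_\sD) = 49 - 7\deg(C) + 2P_a(C)$, the target values $(-1, 10, 42)$ correspond exactly to the one-dimensional component $C = \sing_1(\sD)$ being a reduced line, of degree $1$ and arithmetic genus $0$. My plan is to exhibit a degree $3$ distribution with exactly this singular shape; the Chern class formulas then yield the desired invariants automatically.

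Fix the line $L = V(x_2, x_3) \subset \p^3$. A degree $3$ distribution on $\p^3$ corresponds to a twisted $1$-form $\omega = \sum_{i=0}^3 A_i\, dx_i \in H^0(\Omega^1_{\p^3}(5))$, with each $A_i$ homogeneous of degree $4$, satisfying Euler's relation $\sum_i x_i A_i = 0$ and $\gcd(A_0, \ldots, A_3) = 1$ (so that the cokernel $\mathcal{I}_{Z/\p^3}(5)$ is torsion-free). To force $L \subseteq \sing(\omega)$ one requires $A_i \in (x_2, x_3)$, so write $A_i = x_2 F_i + x_3 G_i$ with $F_i, G_i$ of degree $3$; Euler's relation then becomes $x_2\big(\sum x_i F_i\big) + x_3\big(\sum x_i G_i\big) = 0$, which is equivalent to the existence of a cubic $H$ with $\sum x_i F_i = x_3 H$ and $\sum x_i G_i = -x_2 H$. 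The parameter space of admissible tuples $(F_i, G_i, H)$ is readily seen to be nonempty and of positive dimension (e.g.\ take $H = x_0^3$ and solve the resulting linear system).

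Within this parameter space three Zariski-open conditions must be arranged: (i) $\gcd(A_0, \ldots, A_3) = 1$; (ii) $\sing_1(\omega) = L$ scheme-theoretically, so that $C$ has degree $1$ and no embedded points or extra $1$-dimensional components; and (iii) $\sD$ has degree exactly $3$ (the $A_i$ are not a common multiple of a lower-degree $1$-form). It suffices to exhibit a single tuple $(F_i, G_i, H)$ satisfying all three, which can be carried out by a direct polynomial calculation or with the aid of a computer algebra system. The Chern class formulas then immediately give $(c_1, c_2, c_3)(T_\sD) = (-1, 10, 42)$, as required.

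The main obstacle is condition (ii): careless choices of $(F_i, G_i, H)$ can produce embedded structure along $L$ (driving $P_a(C)$ negative) or additional $1$-dimensional components in $\sing(\omega)$ (driving $\deg(C)$ above $1$), either of which would alter the Chern classes. Both pathologies are governed by the ideal of $2 \times 2$ minors of the coefficient matrix of $\omega$ restricted near $L$, and each fails only on a proper closed subset of the parameter space, so producing one verified example (or a short genericity argument) closes the proof. As an alternative route, one could instead apply Lemma \ref{corolario que genera distribuciones} to a globally generated rank $2$ reflexive sheaf $G$ on $\p^3$ with $c_1(G) = 3$, $c_2(G) = 12$, constructed by Serre correspondence from a locally Cohen-Macaulay curve of degree $12$ and arithmetic genus $16$ (such as a smooth $(3,9)$-curve on a smooth quadric); the burden then shifts to checking global generation of $G$ via Castelnuovo-Mumford vanishings, which is arguably more delicate than the direct construction above.
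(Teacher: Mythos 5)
Your overall strategy is coherent and the Chern-class bookkeeping is correct: a degree~3 distribution whose singular scheme has one-dimensional part equal to a single reduced line ($\deg(C)=1$, $P_a(C)=0$) has tangent sheaf with Chern classes $(-1,10,42)$ by the formulas in Section~4. But the proof as written has a genuine gap at exactly the point where the existence content lives. You reduce everything to exhibiting one tuple $(F_i,G_i,H)$ for which (i) $\gcd(A_0,\dots,A_3)=1$, (ii) $\sing_1(\omega)=L$ scheme-theoretically, with no embedded points and no extra one-dimensional components, and (iii) the form has degree exactly~3 --- and then you assert that such a tuple ``can be carried out by a direct polynomial calculation,'' without producing it. Openness of these conditions in your parameter space is plausible, but openness alone proves nothing: the theorem is precisely the claim that this open locus is nonempty, and that is the step you have deferred rather than established. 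Condition (ii) is the delicate one, as you yourself note: a careless choice acquires embedded structure along $L$ or picks up extra curve components, either of which changes $\deg(C)$ or $P_a(C)$ and hence $c_3$. Until an explicit $\omega$ is written down and its singular scheme actually computed (or a genericity/dimension-count argument is genuinely carried out), this is a plan for a proof rather than a proof.

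For comparison, the paper disposes of this case in one line by taking $d=3$ in a general construction from \cite{Galeano2022} (Proposition~36 there), which produces, for each degree $d$, a distribution singular along a line together with the verification you are deferring; so the two routes are really the same idea, with the hard step outsourced in the paper and omitted in your write-up. Your alternative suggestion via Lemma~\ref{corolario que genera distribuciones} is consistent numerically (one needs $G$ globally generated with $c_1(G)=3$, $c_2(G)=c_2(T_\sD)+c_1(G)-1=12$, $c_3(G)=42$), but as you concede, checking global generation of such a sheaf is at least as hard as the direct construction, so it does not close the gap either.
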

\begin{proof}
    Take $d=3$ in \cite[proposition 36]{Galeano2022}.
\end{proof}

\begin{theorem}\label{Teorema 2 regular}
    There exist codimension one distributions with tangent sheaf having Chern classes $(-1, 2, 2)$, $(-1, 3, 5)$, and $(-1, 3, 7)$.
\end{theorem}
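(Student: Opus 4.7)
The natural tool is Lemma \ref{corolario que genera distribuciones}: a globally generated rank two reflexive sheaf $G$ on $\p^3$ yields a codimension one distribution whose tangent sheaf $G^{\vee}(1)$ has Chern classes $\bigl(2-c_1(G),\ c_2(G)-c_1(G)+1,\ c_3(G)\bigr)$. Since the three target distributions have degree $3$, one needs $c_1(G)=3$, and matching the triples $(-1,2,2)$, $(-1,3,5)$, $(-1,3,7)$ reduces the theorem to exhibiting globally generated rank two reflexive sheaves on $\p^3$ with Chern classes $(3,4,2)$, $(3,5,5)$ and $(3,5,7)$, respectively.

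Each such $G$ I would build via the Serre correspondence for rank two reflexive sheaves: for an appropriate codimension-two subscheme $Y\subset \p^3$---a smooth rational quartic in the first case, and a degree-$5$ subscheme of arithmetic genus $1$ (resp.\ $2$), possibly equipped with an isolated $0$-dimensional component to encode the required $c_3$, in the second and third cases---form a non-trivial extension
\[0 \longrightarrow \op3 \longrightarrow G \longrightarrow \mathcal{I}_{Y/\p^3}(3) \longrightarrow 0,\]
whose middle term is a rank two reflexive sheaf with the prescribed Chern numbers. The numerics of $Y$ are forced by the Chern classes one wants, and the existence of such $Y$ in the relevant component of the Hilbert scheme is classical (smooth rational quartics, smooth elliptic quintics, and smooth curves of degree $5$ and genus $2$ in $\p^3$ all exist and form irreducible families).

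The main obstacle is to verify that each $G$ constructed in this way is globally generated; this is the heart of the argument. By Theorem \ref{teorema regularity} it suffices to establish $0$-regularity, i.e.\ $H^i(G(-i))=0$ for $i=1,2,3$. Applying the long exact sequence in cohomology to the Serre extension twisted by $\op3(-i)$, these vanishings reduce to $H^3(\mathcal{I}_Y)=0$ (automatic on $\p^3$), $H^2(\mathcal{I}_Y(1))=0$ (which follows from $H^1(\mathcal{O}_Y(1))=0$, valid because $\deg\mathcal{O}_Y(1)>2p_a(Y)-2$ in each of the three cases), and, most critically, $H^1(\mathcal{I}_Y(2))=0$, equivalent to the surjectivity of the restriction map $H^0(\op3(2))\to H^0(\mathcal{O}_Y(2))$. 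A generic rational quartic lies on exactly one quadric, a generic elliptic quintic on no quadric, and a generic degree-$5$ genus-$2$ curve on exactly one quadric; the expected dimension count combined with a general-position argument within the appropriate Hilbert component yields the vanishing. Once global generation is secured, Lemma \ref{corolario que genera distribuciones} immediately produces the three required distributions.
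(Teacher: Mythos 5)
Your argument is correct, but it reaches the conclusion by a genuinely different route than the paper. The paper starts from a \emph{stable} rank $2$ reflexive sheaf $E$ with the prescribed Chern classes $(-1,c_2,c_3)$, whose existence and cohomology are read off from Chang's classification tables; those tables give $H^1(E(1))=H^2(E)=H^3(E(-1))=0$, hence $E$ is $2$-regular, $E(2)$ is globally generated, and Lemma \ref{corolario que genera distribuciones} applied to $G=E(2)$ returns $E$ itself as a tangent sheaf. You instead build $G$ (which is exactly an $E(2)$, with Chern classes $(3,4,2)$, $(3,5,5)$, $(3,5,7)$) from scratch via the Serre correspondence, and your numerics are right: with $c_1(G)=3$ one has $c_3(G)=2p_a(Y)-2+\deg(Y)$, so a smooth rational quartic, a smooth elliptic quintic, and a smooth quintic of genus $2$ give precisely $c_3=2,5,7$ --- no auxiliary zero-dimensional components are needed, so your hedge on that point can be dropped. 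Your reduction of $0$-regularity of $G$ to $H^3(\mathcal{I}_Y)=H^2(\mathcal{I}_Y(1))=H^1(\mathcal{I}_Y(2))=0$ is the correct one, and the key vanishing $H^1(\mathcal{I}_Y(2))=0$ in fact holds for \emph{every} smooth curve of the three types (a smooth rational quartic lies on exactly one quadric since two quadrics cut out a genus-$1$ complete intersection; an elliptic quintic lies on no quadric; a genus-$2$ quintic is of type $(2,3)$ on a unique quadric), so the general-position argument you invoke can be replaced by these elementary observations. What each approach buys: yours is self-contained and explicit, exhibiting the singular curve of the section of $G$ concretely and avoiding reliance on Chang's tables; the paper's is shorter and, more importantly, shows that the \emph{generic stable} sheaf in $\mathcal{R}(-1,3,5)$ is realized as a tangent sheaf, which is what is actually needed later for the surjectivity of the forgetful morphism in the moduli computation of $\mathcal{D}^{st}(3,3,5)$ --- your construction alone would not immediately give that.
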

\begin{proof}
    Let $E$ be a stable reflexive sheaf with Chern classes as in the Theorem. By \cite[Table 2.6.1, Table 3.14.1, Table 3.15.1]{chang1984stable} we have that $H^1\!(E(1))=H^2\!(E)=H^3\!(E(-1))=0$, then  $E$ is 2-regular and by Theorem \ref{teorema regularity} $E(2)$ is globally generated.  By Lemma \ref{corolario que genera distribuciones} $(E(2))^{\vee}(1)=E$ is the tangent sheaf of a codimension one distributions of degree 3.
\end{proof}

\section{Moduli Spaces}

\begin{theorem}
    The moduli space $\mathcal{D}^{st}(3, 3, 5)$ of codimension one distribution of degree 3 with Chern classes $(-1, 3, 5)$ and stable and general tangent sheaf, is  an irreducible quasi projective variety, of dimension
    \[\dim\mathcal{D}^{st}(3, 3, 5)= 42.\]
\end{theorem}
\begin{proof}
     Let $\mathcal{R}(-1, 3, 5)$ denote the Moduli space of rank 2 reflexive sheaves with Chern classes $(-1, 3, 5)$ and $\mathcal{R}_0(-1, 3, 5)$ the open subset of general sheaves of $\mathcal{R}(-1, 3, 5)$.  If $[E]\in \mathcal{R}_0(-1, 3, 5)$,  by the proof of Theorem \ref{Teorema 2 regular}, $E$ is the tangent sheaf of a codimension one degree 3 distribution on $\p^3$, then the forgetful morphism $$\varpi:\mathcal{D}^{st}(3, 3, 5)\to \mathcal{R}_0(-1, 3, 5),$$ is surjective. Also, note that $\mathcal{R}(-1, 3, 5)$ is irreducible of dimension 19, see \cite[Theorem 3.14]{chang1984stable}.\\

     By the proof of \cite[Theorem 3.14]{chang1984stable} we have the next exact sequences
     \begin{equation}\label{exact sequence (-1, 3, 5)}
         0\to E'\to \op3^{\oplus 6}\to E(2)\to 0,
     \end{equation}

\begin{equation}\label{exact sequence (-1, 3, 5)2}
         0\to \op3\to E'(1)\to \tp3(-1)\to 0,
     \end{equation}

where $E'$ is a rank 4 vector bundle. Tensoring by $\tp3(-1)$ the exact sequence \ref{exact sequence (-1, 3, 5)}, we obtain
\begin{equation}
         0\to E'\otimes\tp3(-1)\to \tp3(-1)^{\oplus 6}\to E^{\vee}\otimes\tp3\to 0,
     \end{equation}
now taking cohomology we have the next exact sequence
\begin{equation*}
    0\to H^0(E'\otimes\tp3(-1))\to H^0(\tp3(-1))^{\oplus 6}\to H^0(E^{\vee}\otimes\tp3)\to H^1(E'\otimes\tp3(-1))\to 0
\end{equation*}

Tensoring the Euler sequence
\[0\to \op3\to \op3(1)^{\oplus 4}\to \tp3\to 0,\]
by $E'(-1)$, taking cohomology,  using also exact sequence \ref{exact sequence (-1, 3, 5)2}, we get that 
\[H^0(E'\otimes\tp3(-1))=H^1(E'\otimes\tp3(-1))=\{0\}.\]
Then 
\begin{align*}
    \dim\Hom(E,\tp3)= \dim H^0(E^{\vee}\otimes\tp3) &=6\dim H^0(\tp3(-1))\\
                                                    &= 24,
\end{align*}

is constant for every $[E]\in \mathcal{R}_0(-1, 3, 5)$. Then by Theorem \ref{el teorema}, $\mathcal{D}^{st}(3, 3, 5)$ is irreducible and 
\[\dim \mathcal{D}^{st}(3, 3, 5)= 19+ \dim\Hom(E,\tp3)-1=42.\]

\end{proof}

\section*{Declarations}
\textbf{Conflict of interest}  None\\
\textbf{Availability of data and materials} There is no data inclede in this paper. The data availability is not applicable.





\bibliographystyle{plain}
\bibliography{references}
\end{document}